\documentclass[10pt]{article}

\usepackage{graphicx}
\graphicspath{{../../BCfigures/} {BCfigures/} }
\usepackage[colorlinks, linkcolor=blue,  citecolor=blue, urlcolor=blue]{hyperref}%
\usepackage{color}
\usepackage{subfigure}
\usepackage{enumerate}
\usepackage{verbatim}
\usepackage{alltt}

\newcommand{\bq}{\begin{equation}}
\newcommand{\eq}{\end{equation}}
\newcommand{\R}{\mathbb{R}}

\usepackage{pdfsync}
\usepackage{amsmath}\multlinegap=0pt
\usepackage[latin1]{inputenc}
\usepackage{amsthm}
\usepackage{amssymb}
\usepackage[francais,english]{babel}
\numberwithin{equation}{section}
\theoremstyle{plain}
\newtheorem{thm}{Theorem}[section]

\newtheorem{prop}[thm]{Proposition}

\theoremstyle{definition}

\theoremstyle{remark}

\newcommand\N{{\mathbb N}}

\newcommand\pref[1]{(\ref{#1})}

\newcommand{\weakstarto}{\rightharpoonup^*}

\DeclareMathOperator{\argmin}{argmin}
\DeclareMathOperator{\spt}{spt}

\DeclareMathOperator{\diam}{diam}

\def\<#1,#2>{\left<#1,#2\right>}



\def\PP{{\cal P}}

\newcommand\Lag{{\cal L}}

\newcommand\tc{\widetilde{c}}
\newcommand\tmu{\widetilde{\mu}}
\newcommand\tnu{\widetilde{\nu}}

\begin{document}
\title{Numerical methods for matching for teams and Wasserstein barycenters}
\author{
	G. Carlier
		\thanks{\scriptsize CEREMADE, UMR CNRS 7534, Universit\'e Paris IX Dauphine, Pl. de Lattre de Tassigny, 75775 Paris Cedex 16, FRANCE, 
		\texttt{carlier@ceremade.dauphine.fr}},  
	A. Oberman	
		\thanks{\scriptsize Department of Mathematics and Statistics, McGill University, 805 Sherbrooke Street West, Montreal, CANADA, 
		\texttt{adam.oberman@mcgill.ca}}, 
	E. Oudet 
		\thanks{\scriptsize Laboratoire Jean Kuntzmann, Universit\'e Joseph Fourier, Tour IRMA, BP 53 51, rue des Math\'ematiques F-38041 Grenoble Cedex 9, FRANCE,
		\texttt{edouard.oudet@imag.fr}.}
}
\maketitle

\begin{abstract}
Equilibrium multi-population matching (matching for teams) is a problem from mathematical economics which is related to multi-marginal optimal transport. 
  A special but important case is the  Wasserstein barycenter problem, which has applications in image processing and statistics. Two algorithms are presented: a linear programming algorithm and an efficient  nonsmooth optimization algorithm, which applies in the case of the Wasserstein barycenters. The measures are approximated by discrete measures: convergence of the approximation is proved. Numerical results are presented which illustrate the efficiency of the algorithms.
\end{abstract}

\textbf{Keywords:} matching for teams, Wasserstein barycenters, duality, linear programming, numerical methods for nonsmooth convex minimization.

\section{Introduction}

Optimal transport theory has received a lot of attention in the last decades and is now recognized as a powerful tool in PDEs, geometry, and functional inequalities (for which we refer to the monographs of Villani \cite{villani}-\cite{villani2}). Given  two Borel probability  measures $\mu_1, \mu_2$, on metric spaces $X_1$ and $X_2$, respectively, and a cost function $c\in C(X_1\times X_2, \R)$, the Monge-Kantorovich optimal transport problem consists in finding the cheapest way  to transport $\mu_1$ to $\mu_2$ for the cost $c$:
\begin{equation}\label{MK1}\tag{MK}
W_{c}(\mu_1, \mu_2):=\inf_{\gamma \in \Pi(\mu_1, \mu_2)} \int_{X_1\times X_2} c(x_1, x_2) \gamma(dx_1, dx_2)
\end{equation}
where  $\Pi(\mu_1, \mu_2)$ denotes the set of transport plans between $\mu_1$ and $\mu_2$,  i.e.\  the set of probability measures on $X_1 \times X_2$ having $\mu_1, \mu_2$, respectively as marginals. Since this problem is of linear programming type, under very mild assumptions (e.g.\ when $X_1$ and $X_2$ are compact),  the least transport cost $W_{c}(\mu_1, \mu_2)$ admits a dual expression given by the Kantorovich duality formula:
\begin{equation}\label{dualmkform}
W_{c}(\mu_1, \mu_2):=\sup_{\varphi \in C(X_1, \R)}  \left\{  \int_{X_1} \varphi(x_1) \mu_1(dx_1)+\int_{X_2} \varphi^c(x_2) \mu_2(dx_2)  \right\}
\end{equation}
where $\varphi^c$ denotes the $c$-transform of $\varphi$:
\[\varphi^c(x_2):=\min_{x_1\in X_1} \{c(x_1,x_2)-\varphi(x_1)\}.\] 
A  particularly important example is the quadratic case where $X_1=X_2=\R^d$,  $\mu_1$ and $\mu_2$ have finite second moments, and $c(x_1, x_2) = \vert x_1-x_2\vert^2$. This case was first solved by Brenier \cite{Brenier}, who proved that whenever $\mu_1$ is absolutely continuous, there is a unique optimal transport plan that is given by the gradient of a convex potential.   This important result  relates optimal transport to Monge-Amp\`ere equations.  
 We refer to~\cite{benamou2014numerical} and the references therein for numerical methods for optimal transport  based on the Monge-Amp\`ere equation.

 More generally, costs given by distances or  convex power of distances are important because they lead to the so-called Wasserstein distances. More precisely, when $X_1=X_2$ (a metric space with distance $d$) and $c(x_1, x_2)=d(x_1, x_2)^p$ for some $p\ge 1$, the value $W_c(\mu_1, \mu_2)$ in \eqref{MK1} is the $p$-power of the so-called $p$-Wasserstein distance $W_p(\mu_1, \mu_2)$ between $\mu_1$ and $\mu_2$:
\[W_p(\mu_1, \mu_2):=   \Big( \inf_{\gamma \in \Pi(\mu_1, \mu_2)} \int_{X_1\times X_1} d(x_1, x_2)^p \gamma(dx_1, dx_2)  \Big)^{1/p}.\]

\smallskip

In the present article, we are interested in solving numerically the following variant of the optimal transport problem  which allows for more than  two  marginals.  Given (compact metric, say) spaces $X_1, \dots,  X_I$, equipped with Borel probability measures $(\mu_1, \ldots \mu_I)\in \PP(X_1)\times \ldots \times \PP(X_I)$, a (compact metric) space $Z$, and cost functions $c_i \in C(X_i\times Z, \R)$, we look for a probability measure $\nu$ on $Z$ solving:
\begin{equation}\label{ss}
\inf_{\nu\in \PP(Z)} J(\nu):= \sum_{i=1}^I W_{c_i} (\mu_i, \nu).
\end{equation}
This problem was introduced in Carlier and Ekeland \cite{ce}  in the framework of multi-population matching equlibrium; we will shortly recall in section \ref{mft} the economic interpretation of \eqref{ss}.  Problem \pref{ss} is also a special  case of multi-marginal optimal transport (a variant of \pref{MK1} where more than two marginals are prescribed).
Multi-marginal optimal transport is currently an active research field: 
compared to (two marginals) optimal transport, 
there are fewer theoretical results, and the complexity of general multi-marginal optimal transport problems typically increases exponentially in the number of marginals.  Regarding the rapidly developing theory of multi-marginal optimal transport, we refer the reader to the recent papers by  Pass \cite{pass1}, \cite{pass2}, by Ghoussoub and coauthors \cite{ghoussoub1}, \cite{ghoussoub2} and the references therein for costs with special symmetry properties, motivated in particular by challenging computational issues in density functional theory in quantum physics. 

\smallskip

We now discuss a special, but important case of  \eqref{ss} which has a clear geometric interpretation. Let all the $X_i$'s and $Z$ coincide with $\R^d$, the measures $\mu_i$ have finite second moments, and the costs be quadratic (i.e. $c_i(x_i, z):=\lambda_i \vert x_i -z\vert^2$ for some weights $\lambda_i>0$, summing to $1$ without loss of generality). In this case \eqref{ss} takes the form:
\begin{equation}\label{ssbar}
\inf_{\nu\in \PP(\R^d)} J(\nu):= \sum_{i=1}^I \lambda_i W_2^2 (\mu_i, \nu)
\end{equation}
where $W_2$ denotes the $2$-Wasserstein distance. In analogy with the Euclidean case, a solution to \eqref{ssbar} will be called a Wasserstein barycenter of the measures $\mu_i$ with weights $\lambda_i$. Properties of Wasserstein barycenters were studied by Agueh and Carlier \cite{ac}. Wasserstein barycenters interpolate between the measures $\mu_i$; the idea of interpolating between points of a metric space by minimizing some weighted sum of squared distances goes back to the notion of Fr\'echet mean. The case $I=2$ is well-known.  Letting the weights $(\lambda, 1-\lambda)$ vary, one obtains the classical notion of McCann's interpolation \cite{mci} between two probability measures. This interpolating curve is also a geodesic for $W_2$,  and in their seminal paper \cite{bb} on the dynamic formulation of optimal transport, Benamou and Brenier gave a numerical scheme to compute this geodesic.  Finding barycenters between more than two measures is more complicated (barycenters are not associative as soon as the dimension $d$ of the ambient space is larger than $2$).   From a Partial Differential Equations viewpoint, this problem requires to solve a system of Monge-Amp\`ere equations, see \eqref{MAsystem}-\eqref{sumid} below.
 Interestingly, the Wasserstein barycenter  problem  recently found natural applications in image processing, see  Peyr\'e et al.\ \cite{peyre} and statistics, see Bigot and Klein \cite{bk}. Of course, there are lots of variants of the interpolating scheme given by the $W_2$-barycenter problem \eqref{ssbar} and in particular one can replace $W_2$ by $W_p$ for some $p\ge 1$ or even mix different powers of the distance. Slightly abusing the terminology, we will  sometimes refer to barycenters even for these variants and even for the general form of the problem \eqref{ss}.

\smallskip

In the discrete setting, the transportation problem is classical.  In fact, this problem motivated the historical development of optimization, by Kantorovich in 1939, working on Soviet railway transportation, and in the 1940's by Hitchcock \cite[Ch 21]{schrijver2003combinatorial}. 
The  ``assignment problem'' arises in case of integer values weights, it is 
a standard combinatorial optimization  problem which can be solved by the Hungarian algorithm \cite[Ch 21]{schrijver2003combinatorial}.
More generally, the ``transportation problem'' is a linear programming problem which arises
   when the weights are real-valued, it can be solved by the Hitchcock algorithm, \cite[Ch 21]{schrijver2003combinatorial}, or by modern commercial general linear programming software. Returning to the problem with continuous measures, it is natural to approximate the measures  by weighted sums of delta measures. In theory, the resulting problem can be solved using linear programming.   However, the number of variables in the linear programming problem is  quadratic in the number of variables used to represent the measures.   In the discrete setting, current optimization algorithms are limited to approximately several thousands of variables for each of the measures.  This problem size corresponds to  a fairly coarse approximation of a two dimensional continuous  measure.   In special cases, or using specific approximations, improvements are available, see \cite{papadakis2014optimal} for quadratic costs, and  for more references. 
For example, if each measure is represented by, for example,  $40^2 = 1600$ variables, the linear program has $40^4 = 2~560~ 000$ which is near the limitations of linear programming algorithms (we performed experiments using CVX \cite{cvx} and calling several academic and commercial optimization packages).   Enlarging the resolution of the measures quickly overwhelms the capabilities of the algorithms.

The problem~\eqref{ss} is even more challenging, since it involves multiple marginals and an additional unknown measure.   Resolving the barycenter measure on the full grid generally leads to an intractable problem (but as shown by Cuturi and Doucet \cite{cuturidoucet}, some well-chosen smooth approximation can be solved in an efficient way).  Our main contributions regarding numerical schemes for the general problem \eqref{ss} or the particular case of Wasserstein barycenters \eqref{ssbar} are as follows:

\begin{itemize}

\item 
We give a simple linear programming reformulation of \eqref{ss}
in \autoref{lpr} whose size is proportional to the number of marginals. Together with a localization result  that bounds the support of the unknown barycenter in \autoref{sec:localization}, one then obtains a tractable problem. We discretize the problem to arrive at a  finite dimensional linear programming problem in \autoref{sec:discretization}. We prove convergence, in the sense of weak convergence of measures, in \autoref{sec:convergence}.

\item 
Numerical results are presented in \autoref{sec:num}.  These illustrate the validity of this linear programming approach.  Barycenter problems with different costs are solved, as well as a matching for teams problem.

\item The second algorithm which is specialized to the case of Wasserstein barycenter measures \eqref{ssbar}, is described and illustrated in \autoref{duality}.   This problem uses the  dual formulation of the problem explained in \autoref{duality}, and special features of the quadratic cost.   The  efficient nonsmooth optimization algorithm is described in 
\autoref{sec:algobar}. Large size computational examples are presented (on grids of size $200^2$, and for measures resolved with 15000 points).  The examples include barycenter measures using up to five measures, and an example in texture synthesis in \autoref{sec:Num2}.

\end{itemize}

\section{Matching for teams and approximation}\label{mft}

 In this section,  following \cite{ce}, we first derive  the generalized barycenter problem~\eqref{ss} as an equivalent reformulation of an equilibrium problem for multi-population matching arising in economics. 
 Next, we study localization of the barycenter measure.  Then, we present an infinite dimensional linear programming reformulation of \eqref{ss}. 
 This is followed  by a  discretization of the measures, which results in a finite dimensional linear programming problem that is tractable for moderate problem sizes.  Finally, we address stability issues (in the sense of weak convergence of measures) when one approximates the measures $\mu_i$ by some (discrete) measures. 
       
\subsection{Variational characterization of matching equilibria}

The model of Carlier and Ekeland \cite{ce} deals with the equilibrium of a market for a quality good (e.g.\ house, school, hospitals, \dots).  
Producing the good requires assembling a team consisting of a buyer and a set of producers.
For instance, in the case of houses, the producers could be a plumber, an electrician and a mason.  
The quality good has a range of feasible qualities (location, surface, number of rooms, facilities etc), denoted by $Z$ which we assume to be a compact metric space.  

Each of the different populations (buyers, plumbers, electricians, masons...) is indexed by  $i\in \{1,\dots, I\}$.
The agents in each population are hetererogeneous, characterized by a certain type which affects their (quality dependent) cost function. 
For example, some masons are used to work with lower quality bricks, while other work with luxury stones, some electricians live quite far from the location of the house they work on, consumers differ in their tastes... To be precise, for each population $i$, we are given a compact metric space of types, $X_i$, and a continuous  cost function $c_i\in C(X_i\times Z, \R)$ with the interpretation that $c_i(x_i,z)$ is the cost for an agent of population $i$ with type $x_i$ to work in a team that produces good $z$.  The distribution of type $x_i$ in population $i$ is known and given by some Borel probability measure $\mu_i\in \PP(X_i)$.

\smallskip

The goal is to find an equilibrium production line $\nu \in \PP(Z)$ (together with a price system) which clears both the quality good and the labor market.  The equilibrium is described below, and as we shall see, it corresponds to the solution of the (generalized) barycenter measure problem \eqref{primalmft}. In this setting, one looks in particular for an equilibrium system of  monetary transfers (paid by the buyer to the producers). A system of transfers is a collection of continuous functions $\varphi_1,\ldots \varphi_I$: $Z \to \R$ where $\varphi_i(z)$ is the amount paid to $i$ by the other members of the team for producing $z$. An obvious equilibrium requirement is that teams are self-financed i.e.
\begin{equation}\label{selff}
\sum_{i=1}^I \varphi_i(z)=0, \; \forall z\in Z.
\end{equation} 
Given transfers, $ \varphi_1,\ldots \varphi_I$, an agent from population $i$ with type $x_i \in X_i$, gets a net minimal cost  given by the so-called $c_i$-transform of $\varphi_i$:
\begin{equation}\label{citransf}
\varphi_i^{c_i}(x_i):=\min_{z\in Z} \{c_i(x_i,z)-\varphi_i(z)\}.
\end{equation}
By construction,  $\varphi_i^{c_i}(x_i)+ \varphi_i(z)\le c_i(x_i,z)$, and since agents are rational, they choose cost minimizing qualities, i.e. a $z\in Z$ such that
\begin{equation}\label{minfiz}
\varphi_i^{c_i}(x_i)+ \varphi_i(z)= c_i(x_i,z).
\end{equation}
The final unknown is a collection of plans, $\gamma_i\in \PP(X_i\times Z)$, such that $\gamma_i(A_i\times A)$ represents the probability that an agent in population $i$ has a type in $A_i$,  and belongs to a team that produces a quality in $A$. At equilibrium, the first marginal of $\gamma_i$ should be $\mu_i$ (this is equilibrium on the $i$-th labor market) and the second marginal of $\gamma_i$ should not depend on $i$ (this is equilibrium on the quality good market), this common marginal represents the equilibrium quality line.  An equilibrium can then be formally defined. It consists  of a transfer system $(\varphi_1,\ldots \varphi_I)\in C(Z, \R)^I$, probability measures $\gamma_i\in \PP(X_i\times Z)$, and a probability measure $\nu\in \PP(Z)$, such that
\begin{itemize}
\item teams are self-financed i.e. \pref{selff} holds,
\item $\gamma_i \in \Pi(\mu_i, \nu)$ for $i=1, \ldots, I$ (equilibrium on the labor markets and on the good market),  
\item \pref{minfiz} holds on the support of $\gamma_i$ for $i=1, \ldots, I$,  (i.e.\ agents choose cost minimizing qualities). 
\end{itemize}

If an  equilibrium quality line, $\nu$, was known, then clearly the last two conditions above would imply that the plan $\gamma_i$ should be optimal for the Monge-Kantorovich problem:
\begin{equation}\label{wci}
W_{c_i}(\mu_i, \nu):=\inf_{\gamma \in \Pi(\mu_i, \nu)} \int_{X_i \times Z} c_i(x_i, z) \gamma(dx_i, dz). 
\end{equation}

In fact, it was proved in \cite{ce} that there is a purely variational characterization of equilibria, which is tightly related to the following convex problem 
\begin{equation}\label{primalmft}
\inf_{\nu\in \PP(Z)} J(\nu):= \sum_{i=1}^I W_{c_i} (\mu_i, \nu)
\end{equation}
and its dual (concave maximization) formulation (see \cite{ce} or \autoref{duality} for details on this duality)
\begin{equation}\label{dualmft}
\sup\left\{  \sum_{i=1}^{I}\int_{X_{i}}\varphi_{i}^{c_{i}}(x_i)%
\mu_{i}(dx_i)\mbox{  :  }\sum_{i=1}^{I}\varphi_{i}=0\right\}.
\end{equation}

\begin{thm}\label{caractvarmft}
$(\varphi_{i},\gamma_{i},\nu)$ is an equilibrium if and
only if:

\begin{itemize}

\item $\nu$ solves $\pref{primalmft}$,

\item the transfers $(\varphi_1,\ldots \varphi_I)$ solve \pref{dualmft},

\item for  $i=1,\ldots,I$, $\gamma_{i}$ solves the Monge-Kantorovich problem $W_{c_{i}}(\mu_{i},\nu)$.

\end{itemize}

\end{thm}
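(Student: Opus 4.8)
The plan is to deduce the equivalence from two facts about two-marginal optimal transport, both available here because the spaces are compact and the costs $c_i$ continuous: first, Kantorovich duality \emph{with no gap} for each fixed $\nu$, namely $W_{c_i}(\mu_i,\nu)=\sup_{\varphi_i\in C(Z,\R)}\{\int_{X_i}\varphi_i^{c_i}\,d\mu_i+\int_Z\varphi_i\,d\nu\}$ together with the associated complementary slackness; and second, the absence of a duality gap between \pref{primalmft} and \pref{dualmft}, which I take as given (it is the duality referred to just before the statement). I also record the elementary weak-duality inequality that holds for \emph{every} feasible configuration: since $\varphi_i^{c_i}(x_i)+\varphi_i(z)\le c_i(x_i,z)$ by definition of the $c_i$-transform, any $\gamma_i\in\Pi(\mu_i,\nu)$ satisfies $\int c_i\,d\gamma_i\ge\int_{X_i}\varphi_i^{c_i}\,d\mu_i+\int_Z\varphi_i\,d\nu$, and summing over $i$ under the constraint $\sum_i\varphi_i=0$ gives $J(\nu)\ge\sum_i\int_{X_i}\varphi_i^{c_i}\,d\mu_i$ for any dual-feasible transfers. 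The whole proof is then a matter of turning these inequalities into equalities by a sandwiching argument organized around the single scalar identity $J(\nu)=\sum_i\int_{X_i}\varphi_i^{c_i}\,d\mu_i$.

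For the implication ``variational $\Rightarrow$ equilibrium'', suppose $\nu$ solves \pref{primalmft}, the transfers solve \pref{dualmft}, and each $\gamma_i$ is optimal for $W_{c_i}(\mu_i,\nu)$. Self-financing \pref{selff} is exactly dual feasibility, and $\gamma_i\in\Pi(\mu_i,\nu)$ holds by definition of the transport problem, so only \pref{minfiz} on $\spt(\gamma_i)$ remains. Here I would use the no-gap hypothesis in the form $\sum_i W_{c_i}(\mu_i,\nu)=J(\nu)=\sum_i\int_{X_i}\varphi_i^{c_i}\,d\mu_i$. The per-$i$ weak-duality gaps $W_{c_i}(\mu_i,\nu)-\int_{X_i}\varphi_i^{c_i}\,d\mu_i-\int_Z\varphi_i\,d\nu$ are nonnegative, and using $\sum_i\varphi_i=0$ to kill the $\nu$-integral their sum equals the above zero; hence each gap vanishes, i.e.\ $(\varphi_i^{c_i},\varphi_i)$ is an optimal Kantorovich pair for $W_{c_i}(\mu_i,\nu)$. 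Complementary slackness between this optimal pair and the optimal plan $\gamma_i$ then reads $\int(c_i-\varphi_i^{c_i}-\varphi_i)\,d\gamma_i=0$; as the integrand is continuous and nonnegative, it must vanish on $\spt(\gamma_i)$, which is precisely \pref{minfiz}.

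For the converse ``equilibrium $\Rightarrow$ variational'' I would run the same computation backwards, and here only weak duality is needed. Integrating \pref{minfiz} against $\gamma_i$ and using that its marginals are $\mu_i$ and $\nu$ gives $\int c_i\,d\gamma_i=\int_{X_i}\varphi_i^{c_i}\,d\mu_i+\int_Z\varphi_i\,d\nu$. Sandwiching this between $\int c_i\,d\gamma_i\ge W_{c_i}(\mu_i,\nu)$ (admissibility of $\gamma_i$) and $W_{c_i}(\mu_i,\nu)\ge\int_{X_i}\varphi_i^{c_i}\,d\mu_i+\int_Z\varphi_i\,d\nu$ (weak duality) collapses everything to equality, so simultaneously $\gamma_i$ solves $W_{c_i}(\mu_i,\nu)$ and $(\varphi_i^{c_i},\varphi_i)$ is dual-optimal. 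Summing over $i$ and invoking $\sum_i\varphi_i=0$ yields $J(\nu)=\sum_i\int_{X_i}\varphi_i^{c_i}\,d\mu_i$; comparing with the chain $J(\nu)=\text{(dual value)}\le\sup(\text{dual})\le\inf J\le J(\nu)$ forces all terms equal, so $\nu$ attains the primal infimum and $(\varphi_i)$ the dual supremum. The only genuinely delicate step throughout is the passage from ``the nonnegative integrand has zero $\gamma_i$-integral'' to ``\pref{minfiz} holds on $\spt(\gamma_i)$'', which relies on continuity of $c_i$ and $\varphi_i$; everything else is bookkeeping around that one identity.
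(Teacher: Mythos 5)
Your proof is correct, but note first that the paper does not actually contain a proof of Theorem~\ref{caractvarmft}: the result is quoted from Carlier and Ekeland \cite{ce}, and the closest the paper comes is the formal derivation in Section~\ref{duality}, where \pref{dualmft} is obtained as the Lagrangian dual of the coupled linear program \pref{lpcouplings} and the optimality conditions are summarized as the equivalence between primal/dual optimality, saddle points of $\Lag$, and \pref{saturopt} holding on $\spt(\gamma_i)$. Your argument implements exactly that complementary-slackness mechanism, but organized directly around the scalar identity $J(\nu)=\sum_i\int_{X_i}\varphi_i^{c_i}\,d\mu_i$ rather than around the Lagrangian: in the direction ``variational $\Rightarrow$ equilibrium'' you split the single zero duality gap into $I$ nonnegative per-marginal Kantorovich gaps, conclude each vanishes, and then extract \pref{minfiz} on $\spt(\gamma_i)$ from the vanishing $\gamma_i$-integral of the continuous nonnegative function $c_i-\varphi_i^{c_i}-\varphi_i$ (the continuity of $\varphi_i^{c_i}$, used implicitly there, does follow from compactness and uniform continuity of $c_i$, so this is fine). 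Your one external input --- the absence of a duality gap between \pref{primalmft} and \pref{dualmft} --- is precisely the ingredient the paper itself defers to \cite{ce} (``obtained by a slightly different argument''), so taking it as given is legitimate; the paper states it as equality between \pref{lpcouplings} and \pref{dualcouplings}, but this coincides with $\inf$ of \pref{primalmft} $=$ $\sup$ of \pref{dualmft} by the obvious identification of the two primal values. One small remark: you announce per-$\nu$ Kantorovich strong duality as your ``first fact'', but your argument as executed never uses it --- only weak duality per $i$ plus the aggregate no-gap --- so your hypotheses are slightly leaner than advertised. A genuinely nice feature of your write-up is that the converse direction is entirely elementary: it needs only the weak-duality inequality $J(\nu')\ge\sum_i\int_{X_i}\varphi_i^{c_i}\,d\mu_i$ valid for every $\nu'$ and every dual-feasible $(\varphi_i)$, so an equilibrium automatically certifies its own optimality by the sandwich $J(\nu)=\text{dual value}\le\sup \le \inf \le J(\nu)$, with no appeal to the hard duality theorem at all.
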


\subsection{Localization}\label{sec:localization}

As noted in \cite{ce}, the minimization problem \pref{primalmft}, which characterizes equilibrium quality lines, can be reformulated as an optimal transport problem with multi-marginal constraints, as follows. First define the cost
\begin{equation}\label{cmin}
c(x):=\min_{z\in Z} \sum_{i=1}^I c_i(x_i,z),
\end{equation}
where $x=(x_1,\ldots, x_I)$.
Let $T(x) \in Z$ be a measurable selection of the solution of the above minimization, meaning that $T(x)\in Z$ satisfies
\[
\sum_{i=1}^I c_i(x_i,T(x))=c(x).
\]
Then consider the multi-marginal problem
\begin{equation}\label{mmk}
\inf_{\eta \in \Pi(\mu_1, \ldots, \mu_I)} \int_{X_1\times \ldots \times X_I} c(x) \eta(dx),
\end{equation}
where $\Pi(\mu_1, \ldots, \mu_I)$ denotes the set of probability measures on  $X_1\times \ldots \times X_I$ having $(\mu_1, \ldots, \mu_I)$ as marginals.  
It is not difficult to see that if $\eta$ solves \pref{mmk} then $\nu:=T_{\#} \eta$ solves \pref{primalmft} (where as usual $T_\# \eta$ denotes the push forward of $\eta$ through $T$, i.e. $T_\# \eta(B):=\eta(T^{-1}(B))$ for every Borel set $B$).

Conversely, one can relate the minimizers of \pref{primalmft} to those of \pref{mmk}. Indeed, let $\nu$ solve \pref{primalmft} and let $\gamma_i\in \Pi(\mu_i, \nu)$ be an optimal plan for $W_{c_i}(\mu_i, \nu)$. Disintegrating $\gamma_i$ with respect to $\nu$ i.e. writing $\gamma_i=\gamma_i^z \otimes \nu$ and defining $\gamma\in \PP(X_1 \times \ldots \times X_I \times Z)$ by:
\[\gamma:=\otimes_{i=1}^I \gamma_i^z \otimes \nu\] 
and $\eta$ as the marginal of $\gamma$ on the  variables $(x_1,\ldots, x_I)$, one easily checks that

\begin{itemize}

\item $\eta\in \Pi(\mu_1, \ldots, \mu_I)$ solves \pref{mmk},

\item on the support of $\gamma$, $\spt(\gamma)$, one has  
\[\sum_{i=1}^I c_i(x_i, z)=c(x),\]

\item the previous relation, together with the fact that $\nu$ is the $Z$-marginal of $\gamma$ and $\mu_i$ its $X_i$-marginal  then imply a useful  localization property: the support of the barycenter measure, $\spt(\nu)$, is contained in the set of minimizers of  the following problem.
\bq\label{localiz}
\begin{aligned}
\min_{z\in Z}   \sum_{i=1}^I c_i(x_i, z)   \text{ for some }   x_i \in \spt{\mu_i},\quad i = 1,\dots, I.  
\end{aligned}
\eq

\end{itemize}

Since the support of $\nu$ is unknown, which causes difficulties in practice, the localization property \eqref{localiz} gives a practical method for bounding the unknown support of the barycenter measure. The condition above results in a reduction of  the dimensionality of discretized problems since it gives an a priori information on the support of the unknown measure, at the expense of solving an optimization problem.  However this optimization problem is decoupled on the domain $Z$: each point (or neighborhood) can be tested by looping through points (or small neighborhoods) in the domain $Z$ and choices of points in the support sets $\spt(\mu_i)$.

In the case where $X_i$ and $Z$ coincide with some ball of  $\R^d$, and 
the costs are powers of distance,   $c_i(x_i, z)=\lambda_i \vert x_i -z \vert^p$ (with $\lambda_i>0$ and $\sum \lambda_i=1$, say) for some $p\ge1$, one can easily derive an information on the unknown support. Indeed, using the optimality condition for the minimization problem \eqref{localiz}, one deduces that $\spt(\nu)$ is included in  the convex hull of the supports of the $\mu_i$'s.  If we particularize further to  the Wasserstein barycenter case, i.e. to the case $p=2$, the solution of \eqref{localiz} is explictly given by the barycenter $z=\sum_{i=1}^I \lambda_i x_i$ so that the localization property \pref{localiz} gives the following estimate on the barycenter measure $\nu$:
\begin{equation}\label{localizbar}
\spt(\nu)\subset \sum_{i=1}^I \lambda_i \spt(\mu_i). 
 \end{equation}

\subsection{Linear programming formulation}\label{lpr}

Multi-marginals optimal transport problems such as \pref{mmk} are linear programs.  For discrete marginals, such problems can in principle be solved exactly by the simplex method. In practice however, the number of variables explodes with the number of marginals, which makes the problem quickly intractable. We shall see below that one may take advantage of the fact that $c$ is not any cost function but has the special structure \pref{cmin}. Interestingly, it was already proved by Pass \cite{pass1} in the context of multi-marginal optimal transport that such costs are much more well-behaved than arbitrary costs of $I$ variables.

To find  a more tractable linear programming reformulation of the matching for teams problem, it is better to go back to the very definition of an equilibrium in terms of couplings and to reformulate problem \pref{primalmft} as

\begin{equation}\label{lpcouplings}
\inf_{(\gamma_1, \ldots , \gamma_I)\in \Pi} \sum_{i=1}^I \int_{X_i\times Z}  c_i(x_i, z )\gamma_i(dx_i, dz)
\end{equation}
where $\Pi$ consists of all measures $(\gamma_1, \ldots , \gamma_I)\in \PP(X_1\times Z)\times \ldots \times  \PP(X_I\times Z)$ such that

\begin{itemize}

\item the marginal of $\gamma_i$ on the $x_i$ variable is $\mu_i$ i.e. 
\begin{equation}\label{contrmu}
\int_{X_i\times Z} \psi(x_i) \gamma(dx_i, dz)=\int_{X_i} \psi(x_i) \mu_i(dx_i), \; \forall \psi \in C(X_i), 
\end{equation}

\item the marginal of $\gamma_i$ on the $z$ variable does not depend on $i$:  
\begin{equation}\label{contrmu2}
\int_{Z} \varphi(z) \gamma_1(dx_1, dz)=\ldots=\int_{X_I} \varphi(z) \gamma_I(dx_I, dz), \; \forall \varphi \in C(Z).
\end{equation}

\end{itemize}

Clearly, if the $\gamma_i$'s solve \pref{lpcouplings} then their common marginal $\nu\in \PP(Z)$ solves \pref{primalmft} and the $\gamma_i$'s are optimal for the optimal transport problem $W_{c_i}(\mu_i, \nu)$. In other words, the $\gamma_i$'s are equilibrium couplings for the matching for teams problem.

\smallskip

The constraints above being linear, $\Pi$ is a convex and weakly $*$ compact subset of $\PP(X_1\times Z)\times \ldots \times  \PP(X_I\times Z)$ so that \pref{lpcouplings} admits solutions. Moreover in the case of discrete $\mu_i$'s  and $\nu$ supported by $N$ points, the number of variables in the linear program \pref{lpcouplings} is  linear (and not exponential as in the case of the multi-marginal optimal transport problem) in the number of marginals.

\subsection{Discretization}\label{sec:discretization}
The (a priori) infinite dimensional linear programming problem \eqref{lpcouplings} of \autoref{lpr} can be discretized as follows. 
Let $\{ S_i^j \}_{j = 1}^{N_i}$ be a partition of $\spt(\mu_i)$ and let $\{ S^k_0 \}_{k = 1}^{N_0}$ be a partition of $Z$ (or better, of the support set estimated by the method of \autoref{sec:localization}).  Approximate the measures by weighted sums of atoms 
\begin{align*}
\mu_i^A &= \sum_{j=1}^{N_i} \mu_i^j \delta_{x_i^j}, \quad  \text{ for } i = 1,\dots, I, \; \text{ with } \mu_i^j=\mu_i(S_i^j)
\\
\nu^A &= \sum_{k=1}^{N_0} \nu^k \delta_{z^k},  \; \text{ with } \nu^k=\nu(S^k_0) &
\end{align*}
where  ${x^j_i}$ and ${z^k}$ are representative points in the regions ${S^j_i}, {S^k_0}$, respectively. It is well-known, that $\mu_i^A$ converges weakly $*$ to $\mu_i$ as the diameter of the partition  $\{ S_i^j \}_{j = 1}^{N_i}$  tends to $0$. More precisely, denoting by $W_1$ the $1$-Wasserstein distance (which metrizes the weak $*$ topology on probability measures):
\begin{equation}\label{approxdiscr}
W_1(\mu_i^A, \mu_i) \le \max_{j=1,\dots, N_i}  \diam(S_i^j).
\end{equation}

Inserting the approximation defined above into the linear programming problem ~(\ref{lpcouplings}, \ref{contrmu}, \ref{contrmu2})
results in the following finite dimensional linear programming problem 
\bq\label{DLP}
\begin{aligned}
\text{minimize } &\quad \sum_{i=1}^I  \sum_{j,k} c_i (x^j_i, z^k)  \gamma^{j,k}_i &&   \\
\text{subject to:   }  \\
&\sum_{k} \gamma^{j,k}_i = \mu_i^j,  &&  \text{ for all }  i =1,\dots, I , \text{ and  } j=1,\dots,N_i
\\
&\sum_{j} \gamma^{j,k}_1 = \ldots =   \sum_{j} \gamma^{j,k}_I, &&  \text{ for all }\quad k=1,\dots,N_0,&&
\end{aligned}
\eq
along with the non-negativity constraints $\gamma^{j,k}_i \ge 0.$   The (approximated) barycenter is then $\nu^A = \sum_k \nu^k \delta_{z^k}$ where the weight $\nu^k$ is  given by 
the common value,
\[
\nu^{k}  = \sum_{j} \gamma^{j,k}_i, \quad \text{ for any } i= 1,\dots,I. 
\]
The linear programming problem above can be implemented in standard software packages.  The size of the problem above is as follows.
 The number of variables is $N_0\times (N_1 + \dots + N_I)$  
(or $I N^2 $ if each $N_0 = N_1 = \dots = N_I = N$).   The number of constraints is $(N_0+1)\times (N_1 + \dots + N_I)+I N_0$ (that is $I(N^2+2N)$ when $N_0 = N_1 = \dots = N_I = N$). The size of this linear programming problem  thus scales linearly with the number of marginals, for a given, fixed value of $N$ (contrary to the multi-marginal formulation \eqref{mmk}).

\subsection{Approximation and convergence}\label{sec:convergence}

Since in practice, one considers approximation by discrete measures just as in \autoref{sec:discretization}, we wish now to address the stability of  the  following convex problem when one replaces the measures $\mu_i$ by some discrete approximation
\begin{equation}\label{primalmft2}
\inf_{\nu\in \PP(Z)} J(\nu):= \sum_{i=1}^I W_{c_i} (\mu_i, \nu).
\end{equation}
To do so, one has to control the dependence of $W_c(\mu, \nu)$ in its three arguments $(c,\mu, \nu)\in C(X\times Z)\times \PP(X)\times \PP(Z)$. We shall denote by $d_X$ and $d_Z$ the distances on $X$ and $Z$, take now $(c,\mu, \nu)\in C(X\times Z)\times \PP(X)\times \PP(Z)$ and $(\tc, \tmu, \tnu)\in C(X\times Z)\times \PP(X)\times \PP(Z)$ and let $\omega_X$ and $\omega_Z$ be respectively a modulus of continuity of $c$ and $\tc$ with respect to $x$ uniform in $z$ and a modulus of continuity of $c$ and $\tc$ with respect to $z$ uniform in $x$, that is
\[ \max(    \vert c(x,z)-c(x',z)  \vert, \vert \tc(x,z)-\tc(x',z)  \vert)  \le \omega_X (d_X(x, x')), \forall (x,x',z)\in X\times X\times Z \]
and
\[ \max(    \vert c(x,z)-c(x,z')  \vert, \vert \tc(x,z)-\tc(x,z')  \vert)  \le \omega_Z (d_Z(z, z')), \forall (x,z,z')\in X\times Z\times Z.\]

Obviously, one has
\begin{equation}
\vert W_c(\mu, \nu)-W_{\tc}(\mu, \nu) \vert \le \Vert c- \tc \Vert_\infty.   
\end{equation}\label{movc}
Let $\varphi \in C(Z)$ be  a solution in the Kantorovich dual of $W_{\tc}(\mu, \nu)$, that is
\[W_{\tc}(\mu, \nu)=\int_X \varphi^{\tc} d\mu +\int_Z \varphi d\nu\]
by the Kantorovich duality formula, we have
\[W_{\tc}(\tmu, \nu)\ge \int_X \varphi^{\tc} d\tmu +\int_Z \varphi d\nu.\]
Hence, for every $\theta\in \Pi(\mu, \tmu)$, we have 
\[W_{\tc}(\mu, \nu)-W_{\tc}(\tmu, \nu) \le \int_X \varphi^{\tc} d(\mu-\tmu)=\int_{X\times X} (\varphi^{\tc}(x)-\varphi^{\tc}(x')) \theta(dx, dx').\] 
We then observe that $\varphi^{\tc}(x)-\varphi^{\tc}(x')\le \omega_X (d_X(x,x'))$ so that
\begin{equation}\label{movmu}
W_{\tc}(\mu, \nu)-W_{\tc}(\tmu, \nu) \le W_{\omega_X} (\mu, \tmu):=\inf_{\theta\in \Pi(\mu, \tmu)} \int_{X\times X} \omega_X(d_X(x,x'))  \theta(dx, dx'). 
\end{equation}
Similarly
\begin{equation}\label{movnu}
W_{\tc}(\tmu, \nu)-W_{\tc}(\tmu, \tnu) \le W_{\omega_Z} (\nu, \tnu):=\inf_{\eta\in \Pi(\nu, \tnu)} \int_{Z\times Z} \omega_Z(d_Z(z,z'))  \eta(dz, dz'). 
\end{equation}
Putting everything together, we get
\begin{equation}\label{moveverything} 
\vert W_c(\mu, \nu)-W_{\tc}(\tmu, \tnu)\vert \le \Vert c- \tc \Vert_\infty+  W_{\omega_X} (\mu, \tmu)+ W_{\omega_Z} (\nu, \tnu). 
\end{equation}

We then remark that if $\mu_n$ weakly $*$ converges to $\mu$ then $W_{\omega_X} (\mu, \mu_n)\to 0$. Indeed, it is known to imply that the $1$-Wasserstein distance (corresponding to $W_{\omega_X}$ for $\omega_X(t)=t$) between $\mu_n$ and $\mu$ converges to $0$, so that there is some $\theta_n \in \Pi(\mu, \mu_n)$ which (up to a non relabeled subsequence) weakly $*$ converges to some $\theta$ supported on the diagonal of $X\times X$, hence
\[ W_{\omega_X} (\mu, \mu_n) \leq \int_{X\times X} \omega_X(d(x,x')) \theta_n(dx,d x') \to 0.\]

Getting back to the approximation of \pref{primalmft}, take sequences $c_i^n\in C(X_i\times Z)$, $\mu_i^n\in \PP(X_i)$, and $c_i\in C(X_i\times Z)$, $\mu_i \in \PP(X_i)$, such that
\begin{equation}
\Vert c_i^n -c_i\Vert_{\infty} \to 0, \; \mu_i^n  \weakstarto \mu_i
\end{equation}  
and set:
\begin{equation}\label{defJJn}
J(\nu):=\sum_{i=1}^I W_{c_i} (\mu_i, \nu), \; J_n(\nu):= \sum_{i=1}^I W_{c_i^n} (\mu_i^n, \nu),\;   \forall \nu\in \PP(Z).
\end{equation}
Denoting by $\omega^i_{X_i}$ and $\omega^i_Z$ common continuity modulus of the $c_i^n$ (the first one in $x_i$ uniformly in $z$ and the second in $z$, uniformly in $x_i$ just as above)  we then have:

\begin{prop}
For every $(\nu, \nu_n)\in \PP(Z)\times \PP(Z)$ :
\begin{equation}\label{cvjnj}
\vert J(\nu)-J_n(\nu_n) \vert \le \sum_{i=1}^I \Vert c_i -c_i^n \Vert_{\infty} + \sum_{i=1}^I [W_{\omega^i_{X_i}} (\mu_i^n, \mu_i)+W_{\omega^i_{Z}} (\nu_n, \nu)]
\end{equation}
this implies in particular 

\begin{itemize}

\item $J_n(\nu_n)\to J(\nu)$ whenever $\nu_n \weakstarto \nu$,

\item a quantitative estimate for the stability of values:

\begin{equation}\label{conval}
\vert \inf_{\PP(Z)} J- \inf_{\PP(Z)} J_n \vert \le  \sum_{i=1}^I \Vert c_i -c_i^n \Vert_{\infty} + \sum_{i=1}^I W_{\omega^i_{X_i}} (\mu_i^n, \mu_i)
\end{equation}

\item if $\nu_n$ minimizes $J_n$ then, up to a subsequence, it weakly $*$ converges to a minimizer of $J$.

\end{itemize}

\end{prop}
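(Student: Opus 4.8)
The plan is to derive all four assertions from the single-transport stability estimate \eqref{moveverything}, which is already in hand. For the quantitative bound \eqref{cvjnj}, I would apply \eqref{moveverything} term by term, with $(c,\mu,\nu)$ replaced by $(c_i,\mu_i,\nu)$ and $(\tc,\tmu,\tnu)$ by $(c_i^n,\mu_i^n,\nu_n)$, giving
\[ \vert W_{c_i}(\mu_i,\nu)-W_{c_i^n}(\mu_i^n,\nu_n)\vert \le \Vert c_i-c_i^n\Vert_\infty + W_{\omega^i_{X_i}}(\mu_i,\mu_i^n) + W_{\omega^i_Z}(\nu,\nu_n) \]
for each $i$. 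Writing $J(\nu)-J_n(\nu_n)=\sum_{i=1}^I (W_{c_i}(\mu_i,\nu)-W_{c_i^n}(\mu_i^n,\nu_n))$, summing, and using the triangle inequality yields \eqref{cvjnj}, where I use that $W_{\omega^i_{X_i}}$ and $W_{\omega^i_Z}$ are symmetric in their two measure arguments.

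For the first bullet, assume $\nu_n \weakstarto \nu$ and examine the right-hand side of \eqref{cvjnj}: the cost terms vanish by hypothesis, the $X_i$-terms vanish because $\mu_i^n \weakstarto \mu_i$ together with the remark just preceding the statement (weak-$*$ convergence forces $W_\omega$-convergence), and the $Z$-terms vanish by the same remark applied to $\nu_n \weakstarto \nu$; hence $J_n(\nu_n)\to J(\nu)$. Specializing to $c_i^n=c_i$, $\mu_i^n=\mu_i$ shows $J$ is itself weak-$*$ continuous on $\PP(Z)$, which is weak-$*$ compact since $Z$ is compact; therefore $J$ and every $J_n$ attain their infima.

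The stability of values \eqref{conval} is the one step needing a small idea: choose the two measures equal so as to kill the $Z$-terms. Letting $\nu_*$ minimize $J$ and setting $\nu=\nu_n=\nu_*$ in \eqref{cvjnj} gives $\inf J_n \le J_n(\nu_*) \le \inf J + R_n$, where $R_n$ denotes the right-hand side of \eqref{conval}; symmetrically, taking a minimizer $\bar\nu_n$ of $J_n$ and setting $\nu=\nu_n=\bar\nu_n$ gives $\inf J \le J(\bar\nu_n) \le \inf J_n + R_n$. Combining the two estimates yields \eqref{conval}, and in particular $\inf J_n \to \inf J$.

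Finally, for convergence of minimizers, extract from the sequence of minimizers $\bar\nu_n$ of $J_n$ a weak-$*$ convergent subsequence $\bar\nu_{n_k}\weakstarto \bar\nu$ (possible by compactness of $\PP(Z)$). For any $\nu\in\PP(Z)$, minimality gives $J_{n_k}(\bar\nu_{n_k})\le J_{n_k}(\nu)$; passing to the limit via the first bullet (applied to $\bar\nu_{n_k}\weakstarto\bar\nu$ on the left and to the constant sequence $\nu$ on the right) gives $J(\bar\nu)\le J(\nu)$, so $\bar\nu$ minimizes $J$. The whole argument is essentially bookkeeping on top of \eqref{moveverything}; the only genuinely delicate point I anticipate is orienting the inequalities correctly in the stability-of-values step and exploiting the freedom to set the two $Z$-measures equal, which relies on the existence of minimizers furnished by the weak-$*$ continuity established above.
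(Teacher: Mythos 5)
Your proposal is correct and follows essentially the same route as the paper, whose proof is a one-line appeal to the estimate \pref{moveverything} together with the remark that weak-$*$ convergence forces $W_{\omega}$-convergence; you have simply filled in the bookkeeping (summing over $i$, killing the $Z$-terms by taking $\nu=\nu_n$, and the standard compactness extraction for minimizers). The only simplification worth noting is that setting $\nu_n=\nu$ in \pref{cvjnj} gives the uniform bound $\vert J(\nu)-J_n(\nu)\vert\le R_n$ for all $\nu$, from which \pref{conval} follows directly by taking infima, without needing the existence of minimizers at that step.
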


\begin{proof}

The statements directly follow from estimate \pref{moveverything} and the already observed fact that the right-hand side of \pref{cvjnj} converges to $0$ as soon as $\nu_n \weakstarto \nu$. \end{proof}

In the case where the cost functions $c_i=c_i^n$ are Lipschitz (with Lipschitz constant ${\mathrm{Lip}}(c_i)$) and the approximated measures $\mu_i^n$ satisfy (for the usual $1$-Wasserstein distance) $W_1(\mu_i^n, \mu_i) \le \frac{C}{n}$, \pref{cvjnj} above just takes the form
\[ \vert J(\nu)-J_n(\nu_n) \vert \le  \sum_{i=1}^I  {\mathrm{Lip}}(c_i) \Big( \frac{C}{n} +W_1 (\nu^n, \nu)\Big). \]

\section{Numerical simulations: Linear Programming \label{sec:num}}

In this section, we present various numerical simulations using the Linear Programming approach of sections \ref{lpr} and \ref{sec:discretization}.   The localization method of \ref{sec:localization} is used to approximate the support of the barycenter measure.   
An alternate approach to approximating the support of the barycenter measure, which can be combined with localization is a 
 a two stage solution approach:  the first stage, using a coarse grid, gives an approximation of the support of the barycenter, the second stage gives a more accurate representation of the barycenter using information on the support obtained in the first stage.

 All computations in this section were performed in MATLAB on a Mid 2011 MacBook Air laptop.  
To solve \eqref{DLP} we use the software package CVX \cite{cvx} \cite{gb08} which is callable from  MATLAB.   The CVX language allows for a very concise description of the convex optimization problem, and allows for the use of multiple solver libraries (e.g.\ MOSEK, Gurobi).  The numerical  solution obtained is the correct up to tolerances near numerical precision. 



\begin{figure}[htdp]
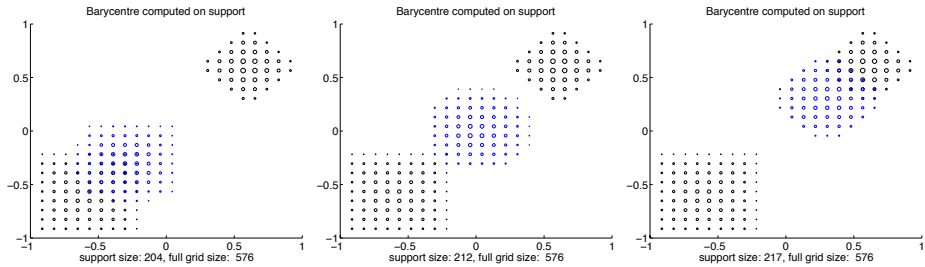

	\centering
	
\hspace{-2in}
	\includegraphics[width=.33\textwidth]{../BCfigures/BC2drefined5w1}
	\includegraphics[width=.33\textwidth]{../BCfigures/BC2drefined5w2}	
	\includegraphics[width=.33\textwidth]{../BCfigures/BC2drefined5w3}
\hspace{-2in}

\caption{Given measures in black.  Figure left, centre, right: solution of the geodesic problem with weights .25, .5, .75, respectively.}
\label{fig:geo}
\end{figure}

\subsection{Geodesic paths between measures in the plane
}
We considered two measures in the plane, and by varying the weights in the quadratic cost function, we computed three points on the  geodesic path (or McCann interpolant) represented in Figure~\ref{fig:geo}. The computational time was less than a minute.  The measures are illustrated by a circle centered on the atom (middle of the corresponding square) and a radius proportional to the weight.  
Both the shape of the support (square, diamond) and the density of the measure are illustrated in the figure: the interpolated measures are influenced by both properties. Figure~\ref{fig:twoDcoarseandrefined} illustrates the two-stage support refinement strategy.
\begin{figure}[htdp]
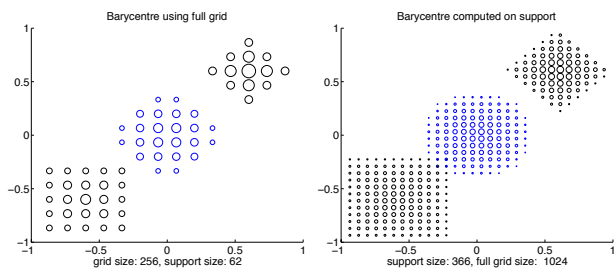

	\centering
	\includegraphics[width=.33\textwidth]{../BCfigures/BC2Dcoarse3}
	\includegraphics[width=.33\textwidth]{../BCfigures/BC2drefined3}	

\caption{Refinement for the geodesic problem.  Left: solution on full grid.  Right: solution on the implied support, but more resolved. The size of the problems is the same, but the resolution is increased by a factor of three.  
}
\label{fig:twoDcoarseandrefined}
\end{figure}





\subsection{Comparing different cost functions}
For the next set of examples, we took two uniform measures, the first corresponding to a vertically oriented rectangle, and the second corresponding to an horizonal rectangle.  These measures  are shown in Figure~\ref{fig:measures}. First, we compared the convergence of the solutions for different grid sizes in Figure~\ref{fig:resolution}.  Notice that the general support of the computed measures seems stable, but there are oscillations in the density, for different resolutions.

Next, we computed the barycenter with various power cost functions $C(x,y) = |x-y|^p$, for $p = 1, 2, 3, 4$.  
The solutions we computed use grids of size $50^2$.  Computational time was close to two minutes.   A second run using grid size of $100^2$ and  a localization of the support took 30 to 45 minutes and is represented in Figure~\ref{fig:costs}.  The densities are plotted using a grayscale which corresponds to the relative values, however the grayscale is different for each figure.  
We also include another view of the density for $p=1$ in Figure~\ref{fig:surfaceplot}.

The solutions have a complicated geometry.   For the cost with $p=1$, the support of the barycenter is the entire convex hull of the supports of the measures, although the density is highly concentrated at the intersection of the measures.  The density ranges from about 0.01 at the edges to 0.12 in the center.

For the case $p=2$, the density is supported on a square, but wider than the width of the rectangle.  The density has some oscillations, but is strictly positive (taking values in the range [.004,.005]).    For the case $p=3$, the density is supported on a small octagonal shape, with zero density in the middle, and with larger oscillations.  For the case $p=4$, a much larger octagonal shape appears with a large zero density hole in the middle.   The supports of the barycenter measures are close to the ones estimated by localization.
%
%
%
%
%

\begin{figure}[htdp]
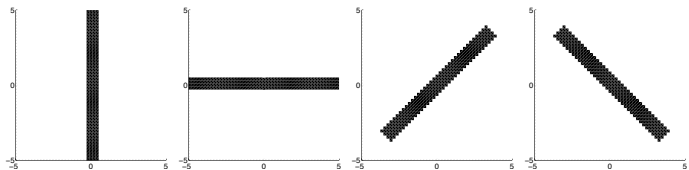

	\centering	
	\includegraphics[width=.18\textwidth]{../BCfigures/AdamJan/CompM1}
	\includegraphics[width=.18\textwidth]{../BCfigures/AdamJan/CompM2}
		\includegraphics[width=.18\textwidth]{../BCfigures/Adam4m/BC4Mm3}
	\includegraphics[width=.18\textwidth]{../BCfigures/Adam4m/BC4Mm4}		
	\caption{The measures $m_1, m_2, m_3, m_4$ used in the examples which follow.}
\label{fig:measures}
\end{figure} 

\begin{figure}[htdp]
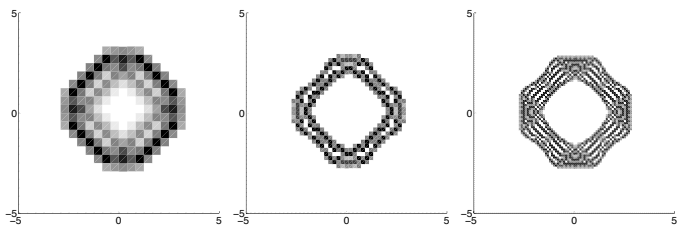

	\centering	
	\includegraphics[width=.24\textwidth]{../BCfigures/Adam4m/Comp2PowerCosts25}
	\includegraphics[width=.24\textwidth]{../BCfigures/Adam4m/Comp2PowerCosts50}
		\includegraphics[width=.24\textwidth]{../BCfigures/Adam4m/Comp2PowerCosts100}
	\caption{Comparison of the numerical barycenter for measures $m_1, m_2$ using cost $C(x,y) = |x-y|^4$ on different grid sizes: $25^2, 50^2$ and $100^2$.  Note the general shape of the solutions are similar, but the density has more oscillations at higher resolution.  }
\label{fig:resolution}
\end{figure}

\begin{figure}[htdp]
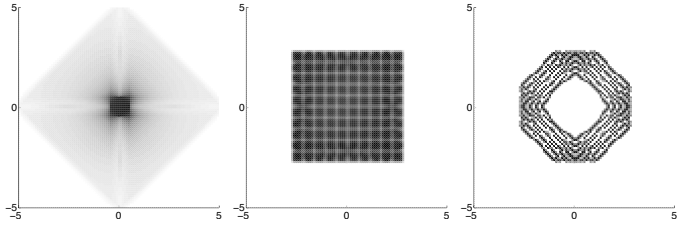

	\centering
	\includegraphics[width=.24\textwidth]{../BCfigures/Adam4m/Comp2PowerCosts100p1}
	\includegraphics[width=.24\textwidth]{../BCfigures/Adam4m/Comp2PowerCosts100p2}
	\includegraphics[width=.24\textwidth]{../BCfigures/Adam4m/Comp2PowerCosts100p3}
	\caption{Barycenters of two rectangles $m_1, m_2$, with cost $C(x,y) = |x-y|^p$ for $p = 1, 2, 3$,
	using grid size $100^2$.	
	}
\label{fig:costs}
\end{figure}

\begin{figure}[htdp]
	\centering
		\includegraphics[width=.38\textwidth]{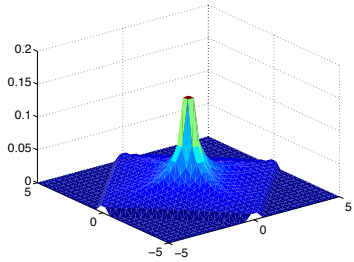}
	\caption{Surface plot of the barycenter corresponding to $p=1$.}
\label{fig:surfaceplot}
\end{figure} 

Finally, we computed the barycenters using all four measures from figure~\ref{fig:measures}, see figure~\ref{fig:4meas}. 

\begin{figure}[htdp]
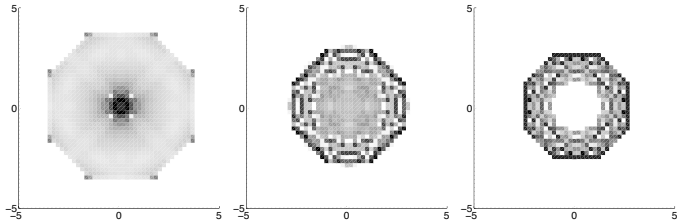


	\includegraphics[width=.24\textwidth]{../BCfigures/Adam4m/BC4mLinearCosts}
	\includegraphics[width=.24\textwidth]{../BCfigures/Adam4m/BC4MQuadCosts}
	\includegraphics[width=.24\textwidth]{../BCfigures/Adam4m/BC4mCubicCosts}
	\caption{Barycenters of the measures $m_1,m_2, m_3,m_4$ (four rotated rectangles) with cost $C(x,y) = |x-y|^p$ for $p = 1, 2, 3$,
	using grid size $50^2$.	}
\label{fig:4meas}
\end{figure}

\subsection{Matching for teams}
We considered the matching for teams problem and used measures and costs as follows, also see Figure~\ref{fig:MatchTeams}.
Set  the quality domain  $Z= [0,1]^2$ and write $z = (z_1,z_2)$ for points in $Z$.
Set $M_0, M_1, M_2$ to be measures which have constant density on their support, and let their supports be
$[1,2]^2$,  $[1.25,1.75] \times [1,2]$, and  $[1,2]\times [1.25,1.75]$, respectively.
The corresponding cost functions (with the interpretation that $c_0$ is the negative of the buyer's utility) are 
\begin{align*}
c_0(x,z) &=  -5.5(x_1 z_1 + x_2 z_2)
\\
c_1(x,z) = c_2(x,z) &= (x_1+z_1)^2 +(x_2+z_2)^2.
\end{align*}

The solution concentrates mass at the boundary, and especially at the corners of the domain.
\begin{figure}[htdp]
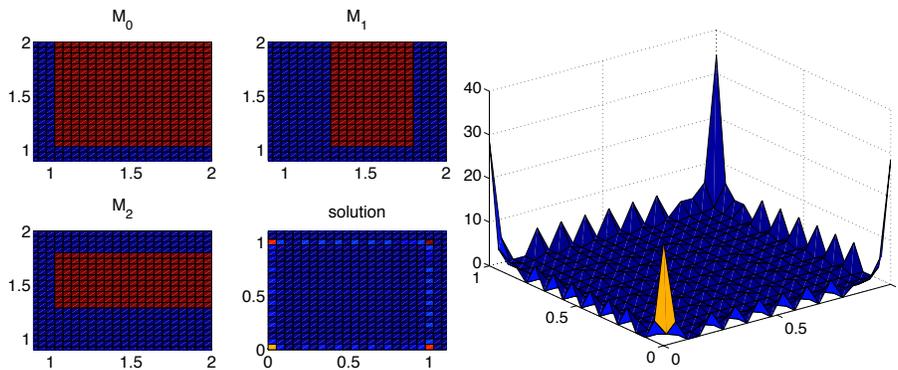

	\centering
	\includegraphics[width=.49\textwidth]{../BCfigures/AdamJan/MatchTeams}
	\includegraphics[width=.49\textwidth]{../BCfigures/AdamJan/MatchTeams2}			
	\caption{Solution of the matching for teams problem.  Left: the three measures, and the solution.  Right: surface plot of the solution.  The solution concentrates mass mostly on the corners with some mass on the edges.}
\label{fig:MatchTeams}
\end{figure}


\label{numlp}


\section{Dual formulation}\label{duality}

\subsection{Duality and optimality conditions}

Let us now explain why  the variational problem \pref{dualmft} can be naturally be seen as a dual formulation of \pref{lpcouplings} (see \cite{ce} for more details on this duality). To that end, let us observe that $(\gamma_1, \ldots , \gamma_I)\in \Pi$ if and only if \pref{contrmu} holds for every $i$ (these are the fixed $\mu_i$ marginals constraints) and 
\begin{equation}\label{dualcommonmarginal}
 \sum_{i=1}^I \int_{X_i\times Z} \varphi_i (z) \gamma_i(dx_i, dz)=0, \mbox{ as soon as }  \sum_{i=1}^I \varphi_i(z)  =0, \forall z\in Z.
\end{equation}
Indeed, clearly if the $\gamma_i$'s have the same marginal on $Z$ then \pref{dualcommonmarginal} holds. Conversely assume \pref{dualcommonmarginal}, let $i\neq j$ and $\varphi \in C(Z)$ then applying \pref{dualcommonmarginal} to the potentials $\varphi_i=\varphi$, $\varphi_j=-\varphi$ and $\varphi_k=0$ for $k\in\{1,\ldots,I\}\setminus \{i, j\}$ we see that $\int_{X_i\times Z} \varphi (z) \gamma_i(dx_i, dz)=\int_{X_j\times Z} \varphi (z) \gamma_j(dx_j, dz)$. This proves that \pref{dualcommonmarginal} characterizes the fact that the $\gamma_i$'s share the same marginal on $Z$. This enables us to rewrite \pref{lpcouplings} in inf-sup form:
\begin{equation}\label{infsup}
\inf_{\gamma_i \ge 0} \sup \left\{ \Lag((\gamma_i)_i, (\psi_i)_i, (\varphi_i)_i ) \; : \;  \psi_i \in C(X_i), \; \varphi_i \in C(Z)\; : \; \sum_{i=1}^I \varphi_i =0  \right\}
\end{equation}
where the Lagrangian $\Lag$ is given by
\[\begin{split} \Lag((\gamma_i)_i, (\psi_i)_i, (\varphi_i)_i)&:=  \sum_{i=1}^I \int_{X_i\times Z} (c_i(x_i,z)-\psi_i(x_i)-\varphi_i(z)) \gamma_i(dx_i, dz)\\
&+\sum_{i=1}^I \int_{X_i} \psi_i(x_i) \mu_i(dx_i).
\end{split}\] 
To obtain the desired dual formulation, we formally switch the inf and the sup (again, we refer to \cite{ce} for a rigorous derivation):
\[\sup_{(\psi_i, \varphi_i), \; \sum \varphi_i=0} \inf_{\gamma_i\ge 0}  \Lag((\gamma_i)_i, (\psi_i)_i, (\varphi_i)_i). \]  
We next observe that 
\[\begin{split}
&\inf_{\gamma_i\ge 0}  \Lag((\gamma_i)_i, (\psi_i)_i, (\varphi_i)_i)=\sum_{i=1}^I \int_{X_i} \psi_i(x_i) \mu_i(dx_i)\\
&+\sum_{i=1}^I \inf_{\gamma_i\ge 0 }  \int_{X_i\times Z} (c_i(x_i,z)-\psi_i(x_i)-\varphi_i(z)) \gamma_i(dx_i, dz)
\end{split}\]
and the latter infimum is $0$ when 
\begin{equation}\label{constrdual}
c_i(x_i,z)\ge \psi_i(x_i)+\varphi_i(z), \; \forall (x_i, z)\in X_i\times Z
\end{equation}
and $-\infty$ otherwise. The dual of \pref{lpcouplings} therefore consists in  maximizing
\[\sum_{i=1}^I \int_{X_i} \psi_i(x_i) \mu_i(dx_i)\]
subject to the constraints \pref{constrdual} and $\sum_{i=1}^I \varphi_i =0$. For fixed $\varphi_i$, the maximal $\psi_i$ that satisfies \pref{constrdual} being $\psi_i=\varphi_i^{c_i}$, we see that that the dual can be equivalently formulated as 
\begin{equation}\label{dualcouplings}
\sup\left\{  \sum_{i=1}^{I}\int_{X_{i}}\varphi_{i}^{c_{i}}(x_i)%
\mu_{i}(dx_i)\mbox{  :  }\sum_{i=1}^{I}\varphi_{i}=0\right\}
\end{equation}  
which is exactly \pref{dualmft}. For the existence of solutions and the equality between the infimum in \pref{lpcouplings} and the supremum in \pref{dualcouplings} (which is obtained by a slightly different argument), we again refer to \cite{ce}.  Now the optimality conditions for \pref{lpcouplings} and \pref{dualcouplings} are summarized by the equivalence between the following assertions:

\begin{itemize}

\item $(\gamma_i)_i\in \Pi$ solves \pref{lpcouplings} and $(\varphi_i)_i$ such that $\sum_{i=1}^I \varphi_i =0$ solves \pref{dualcouplings},

\item $((\gamma_i)_i, (\varphi_i^{c_i})_i, (\varphi_i)_i)$ is a saddle point of $\Lag$,

\item for every $i$, one has
\begin{equation}\label{saturopt}
\varphi_i^{c_i}(x_i)+\varphi_i(z)=c_i(x_i, z)
\end{equation}
$\gamma_i$-almost everywhere on $X_i\times Z$ or, equivalently, by continuity, on the support of $\gamma_i$.

\end{itemize}

\subsection{The case of Wasserstein barycenters}

From now on, we restrict ourselves to the quadratic case where all the $X_i$'s and $Z$ are some ball $B$ (say) of $\R^d$ and the costs $c_i$ are quadratic:
\[c_i(x_i, z):=\frac{\lambda_i}{2} \vert x_i -z\vert^2\]
where the $\lambda_i$'s are positive coefficients which we normalize in such a way that they sum to $1$.  In this case, \pref{primalmft} corresponds to
\begin{equation}\label{barycenterpbm}
\inf_{\nu\in \PP(B)} \sum_{i=1}^I \lambda_i W_2^2(\mu_i, \nu)
\end{equation}
where $W_2^2$ stands for the squared $2$-Wasserstein distance. This problem has been studied in details in \cite{ac} where uniqueness (under the assumption that one of the measures does not give mass to small sets), characterization,  $L^p$ or $L^{\infty}$ regularity   results are established for Wasserstein barycenters  as well as a close connection with the quadratic multimarginal optimal transport problem  of Gangbo and \'Swi\c ech \cite{gansw}. Since  Wasserstein barycenters constitute a natural way to interpolate between an arbitrary number of measures, they therefore also find applications in image processing \cite{peyre} and statistics \cite{bk}.

Let us now informally give the optimality conditions for the barycenter using once again the dual formulation \pref{dualcouplings} (see  \cite{ac} for details). In the present quadratic setting, the formula for the $c_i$-transform takes the form
\[\varphi_i^{c_i}(x_i)=\inf_{z} \left\{ \frac{\lambda_i}{2}\vert x_i-z\vert^2 -\varphi_i(z)  \right\},\]
which, defining
\[u_i(x_i):=\frac{1}{2} \vert x_i \vert^2 -\frac{\varphi_i^{c_i}(x_i)}{\lambda_i}\]
can conveniently be rewritten as
\[u_i=\Big(  \frac{1}{2} \vert . \vert^2- \frac{\varphi_i}{\lambda_i} \Big)^*\]
(where $*$ denotes the usual Legendre transform). In particular, $u_i$ is convex (hence differentiable outside of a small set) and defining $v_i:=u_i^*$ we have
\begin{equation}\label{ineg23}
 \frac{1}{2} \vert . \vert^2- \frac{\varphi_i}{\lambda_i} \ge \Big(  \frac{1}{2} \vert . \vert^2- \frac{\varphi_i}{\lambda_i} \Big)^{**}=u_i^*=v_i.
\end{equation}
Moreover the optimal coupling $\gamma_i$ is concentrated on the set where equality \pref{saturopt} holds which is equivalent to the relation $u_i(x_i)+  \frac{1}{2} \vert z \vert^2- \frac{\varphi_i(z)}{\lambda_i} =x_i \cdot z$ but recalling \eqref{ineg23}, this implies  that $x_i \cdot z \ge u_i(x_i)+v_i(z)=u_i(x_i)+u_i^*(z) \ge x_i \cdot z$ so that $z=\nabla u_i(x_i)$ (provided $u_i$ is differentiable at $x_i$ which is the case $\mu_i$ a.e. as soon as $\mu_i$ does not charge small sets...). This implies that the barycenter which is also the marginal $\nu$ that is common to all the $\gamma_i$'s is actually given by $\nu=\nabla {u_{i}}_{\#} \mu_i$ for every $i$ and $\nabla u_i$ is the optimal transport between $\mu_i$ and $\nu$ for $W_2^2$. 
As explained above, we can deduce from \pref{ineg23} and the fact that $\gamma_i$-almost everywhere equality \pref{saturopt} holds    that for $\nu$-a.e. $z$, one has 
\[
 \frac{1}{2} \vert z \vert^2- \frac{\varphi_i(z)}{\lambda_i}=v_i(z).
\]
 Recalling that the $\varphi_i$ have to sum to $0$, we deduce that
 \begin{equation}\label{sumid}
 \sum_{i=1}^I \lambda_i v_i(z)=\frac{\vert z\vert^2}{2}
 \end{equation}
 on the support of $\nu$. The optimality conditions for the barycenter $\nu=\nabla {u_{i}}_{\#} \mu_i=\nabla {v_{i}}^*_{\#} \mu_i$ therefore, at least formally take the form of the system of Monge-Amp\`ere equations
\begin{equation}\label{MAsystem}
\nu=\mu_i(\nabla v_i) \det(D^2 v_i), \; i=1,\ldots, I
\end{equation}
which is supplemented with equation \pref{sumid} on the support of $\nu$. We shall see in the next paragraph how to compute numerically in an efficient way the potentials $\varphi_i$.


\subsection{An efficient algorithm for Wasserstein barycenters}\label{sec:algobar}

\paragraph{Discretization of the dual problem.} We assume in all this section that the sets  $X_i$'s and $Z$ are subsets of $\R^d$ for some $d=1,2$ or $3$.  As described in the previous section, the computation of one Wasserstein quadratic barycenter is equivalent in its dual form to the maximization of
\begin{equation}
\label{dualcouplings2}
\sum_{i=1}^{I}\int_{X_{i}}\varphi_{i}^{c_{i}}(x_i) \mu_{i}(dx_i)
\end{equation}  
where
\[\varphi_i^{c_i}(x_i)=\inf_{z} \left\{ \frac{\lambda_i}{2}\vert x_i-z\vert^2 -\varphi_i(z)  \right\},\]
under the linear constraint $\sum_{i=1}^{I}\varphi_{i}(z)=0$ for all $z\in Z$. 
This formulation leads to the following natural discretization of Wasserstein quadratic barycenter computation. Suppose $(y_i^j,\nu_i^j)_{j = 1,\dots, N_i} \subset X_i \times \R_+$    is a convergent quantization of the measures $ \mu_{i}$. More explicitly, we assume that for all $i=1,\dots,I$ 
$$\lim_{N_i \rightarrow \infty} c(N_i)  \sum_{j=1}^{N_i} \nu_i^j \delta_{y_i^j} = \mu_{i}$$
in the sense of the weak convergence of measures. In order to only consider probability measures, we set $c(N_i)= (\sum_{j=1}^{N_i} \nu_i^j)^{-1}$. Additionally, we suppose that $(z_k)$ is a dense countable family of points of $Z$.  Based on (\ref{dualcouplings2}) and previous quantizations, for a given $N_k \in \N$, our discrete optimization problem of $I\times N_k$ variables reads
\begin{equation}
\label{dualcouplings_d}
\Phi((\varphi_{1}^{k}),\dots,(\varphi_{I}^{k})) = \sum_{i=1}^{I} c(N_i) \sum_{j=1}^{N_i} \nu_i^j \min_{k=1,\dots,N_k} \left\{ \frac{\lambda_i}{2}\vert y_i^j-z_k\vert^2 -\varphi_i^k  \right\}
\end{equation}
under the $N_k$ pointwise linear constraints: 
\begin{equation}
\label{dualcouplings_constr_d}
\sum_i \varphi_{i}^{k} = 0, \qquad \forall k=1,\dots ,N_k.
\end{equation}
This optimization problem in its dual form can be seen as a large scale non-smooth concave maximization problem. We discuss in the next paragraph  alternatives that have been developed  to solve numerically this type of problems. 

\paragraph{Non smooth algorithms.}  Many different approaches
have been introduced  in the last decades to approximate optimal solution of
non-smooth concave (or convex) problems, e.g.  gradient sampling
methods \cite{burke2005robust} and bundle methods
\cite{lemarechal1978nonsmooth}. These algorithms make use of a partial
or complete description of superdifferentials  in order to identify
ascent directions (see next paragraph). Even though Proposition~\ref{prop:subdifferential}
describes explicitly the whole superdifferential, finding an effective
ascent direction in practice is made difficult by the large dimension
of some superdifferentials.  Additionally,
those approaches are essentially of order one and follow the singular
parts of the graph of the cost function. These two facts could explain
a slow rate of convergence when starting from an initial point far
away from any optimal vector.

One surprisingly efficient alternative for minimizing non-smooth
functions is the use of quasi-Newton methods. It is known
\cite{powell1976some} that if the maximized function, $\Phi$, is twice
continuously differentiable and the suplevel set $\Phi \geq \Phi(x_0)$
is bounded, then the sequence of function values generated by the BFGS
method with inexact Armijo-Wolfe line search, starting from $x_0$
converge to the maximal value of $\Phi$. More recently, it has been
pointed out by different authors \cite{urruty1996convex,lewis2009nonsmooth,lewis2008behavior} that
variable metric algorithms may produce in some cases sequences which
converge to an optimal point in the sense of Clarke. The mathematical
analysis of this good behavior has just been initiated in recent
papers of Overton \cite{lewis2009nonsmooth,lewis2008behavior}. This
efficiency could be explained heuristically by the fact that the
approximated inverse of the Hessian matrix has a spectral
decomposition in two subspaces which describe the two different local
behaviors of the cost $\Phi$: a subspace associated to the regular
directions of the cost function $\Phi$, and the subspace of
eigenvectors whose eigenvalues are small in absolute value, which
correspond to the singular directions of $\Phi$.

It has been observed in simple situations that L-BFGS (low memory version of
Broyden-Fletcher-Goldfarb-Shanno algorithm) algorithms are sometimes
able to converge to an optimal point. In more standard examples, where concentration can occur for instance, L-BFGS
approach fails to converge. This expected behavior
for strongly non-smooth functions illustrates the need for using more specific non-smooth techniques close from the optimal point. The costly, but reliable, bundle type algorithms have demonstrated their efficiency in this context.

We will not give here a detailed study of quasi Newton methods applied
to optimal transportation which would be out of the scope of this
paper. We only point out that the L-BFGS  algorithm combined  with a bundle approach
 gives a rather efficient way to solve this type of problem. We refer to \cite{haarala2007globally} for a careful study and an efficient implementation of this kind of hybrid algorithm.

\paragraph{Gradient computation.} The previous approach relies on the capability of providing at every iteration one supergradient vector of the current  iterate. It is straightforward to obtain the following characterization of the supergradient of the discrete dual cost $\Phi$:

\begin{prop}
\label{prop:subdifferential}
Let  $(\varphi_{1}^{k}),\dots,(\varphi_{I}^{k})\in \R^{N_k \times I}$. Then 
$$((v_1^k),\dots,(v_I^k)) \in \partial \Phi ((\varphi_{1}^{k}),\dots,(\varphi_{I}^{k}))$$
if and only if it is a convex combination of the finite set of extremal vectors defined in the following way.
Let $\varphi_i^{k(i,j)^*}$ be any \emph{selection} of minimizing values involved in the dual cost. That is $\forall i,j$
\begin{equation}
\label{eqargmin}
 k(i,j)^* \in \argmin_{k=1,\dots,N_k} \left\{ \frac{\lambda_i}{2}\vert y_i^j-z_k\vert^2 -\varphi_i^k  \right\}.
\end{equation}
Then, the set of extremal vectors is the finite collection of all vectors 
\begin{equation}
\displaystyle (e^{k^*}) =\displaystyle (\sum_j \sum_{k} -c(N_i)\nu_i^j \delta_{k(i,j)^*}(k))
\end{equation}
for any \emph{selection} $(k(i,j)^*)$.
\end{prop}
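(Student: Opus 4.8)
The plan is to recognize $\Phi$ as a nonnegatively weighted sum of pointwise minima of affine functions and to apply the standard superdifferential calculus for concave functions. First I would record that $\Phi$ is concave: for each pair $(i,j)$ the map
\[ g_{i,j}(\varphi):=\min_{k=1,\dots,N_k}\Bigl\{\tfrac{\lambda_i}{2}\vert y_i^j-z_k\vert^2-\varphi_i^k\Bigr\} \]
is an infimum of finitely many affine functions of the full variable $\varphi=((\varphi_i^k))\in\R^{N_k\times I}$, hence concave and finite everywhere, and $\Phi=\sum_{i=1}^I\sum_{j=1}^{N_i}c(N_i)\nu_i^j\,g_{i,j}$ with nonnegative coefficients $c(N_i)\nu_i^j$. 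Because each $g_{i,j}$ depends only on the $i$-th block $(\varphi_i^1,\dots,\varphi_i^{N_k})$, its supergradients are supported on that block, which is exactly what produces the block structure of the claimed extremal vectors.

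Next I would compute $\partial g_{i,j}$. The $k$-th affine piece of $g_{i,j}$ is $\varphi\mapsto \tfrac{\lambda_i}{2}\vert y_i^j-z_k\vert^2-\varphi_i^k$, whose constant gradient is $-e_{(i,k)}$, the negative of the standard basis vector in the coordinate $\varphi_i^k$. By the classical formula for the superdifferential of a minimum of affine functions, at a given $\varphi$ one has $\partial g_{i,j}(\varphi)=\conv\{-e_{(i,k)}:k\in\argmin_{k'}(\tfrac{\lambda_i}{2}\vert y_i^j-z_{k'}\vert^2-\varphi_i^{k'})\}$, i.e. the convex hull ranges precisely over the active indices appearing in \eqref{eqargmin}.

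Then I would assemble the pieces. Since every $g_{i,j}$ is finite-valued on all of $\R^{N_k\times I}$, the Moreau--Rockafellar sum rule holds with no constraint qualification, giving the weighted Minkowski sum $\partial\Phi(\varphi)=\sum_{i,j}c(N_i)\nu_i^j\,\partial g_{i,j}(\varphi)$. The final step is the convex-geometry identity $\conv(A)+\conv(B)=\conv(A+B)$ for finite sets (which follows in one line by expanding a product of two convex combinations, and extends by induction over the finitely many pairs $(i,j)$): it turns the Minkowski sum of convex hulls into a single convex hull of all Minkowski-sum selections. A selection here is exactly a choice $(k(i,j)^*)$ of a minimizing index for every $(i,j)$, and the corresponding summed vector $\sum_{i,j}-c(N_i)\nu_i^j\,e_{(i,k(i,j)^*)}$ has $(i,k)$-component $-c(N_i)\sum_j\nu_i^j\,\delta_{k(i,j)^*}(k)$, which is precisely the extremal vector $(e^{k^*})$ of the statement. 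Hence $\partial\Phi(\varphi)$ is the convex hull of these extremal vectors, as claimed.

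The concavity and the min-of-affine formula are routine; the step requiring the most care is the last one, namely matching the Minkowski-sum selections to the extremal vectors with the correct signs and weights and checking that the convex-hull identity is what collapses a sum of convex hulls into one convex hull of vertex sums. An alternative route that avoids quoting the sum rule would be to characterize $v\in\partial\Phi$ directly from the supergradient inequality by testing coordinate increments blockwise, which reduces to assigning, for each $(i,j)$, a probability weighting over its active indices; this yields the same description but is essentially a repackaging of the convex-combination argument.
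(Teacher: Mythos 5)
Your proof is correct, and in fact the paper offers no proof at all --- it simply declares the characterization ``straightforward'' before stating the proposition. Your argument (blockwise decomposition of $\Phi$ into nonnegatively weighted minima of affine functions, the active-index formula $\partial g_{i,j}(\varphi)=\conv\{-e_{(i,k)}:k\in\argmin\}$, the exact Moreau--Rockafellar sum rule, which needs no qualification here since every $g_{i,j}$ is finite concave on all of $\R^{N_k\times I}$, and the identity $\conv(A)+\conv(B)=\conv(A+B)$ turning the Minkowski sum into a convex hull over selections) is precisely the standard reasoning the authors implicitly invoke, with your sign and weight bookkeeping matching their extremal vectors $(e^{k^*})$ exactly.
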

A crucial observation that has been raised in \cite{mo2012discrete} is the fact that the computation of a vector of the superdifferential does not require generically an order of $I \times N_k \times \sum N_i$ operations. Actually the following formulation makes it possible to use a special data structure called \emph{kd-tree} which in most cases reduces the complexity of finding one vector of $\partial \Phi ((\varphi_{1}^{k}),\dots,(\varphi_{I}^{k}))$. Notice that for very large scale problems, the so called \emph{Approximate Nearest Neighbor Search} could provide a relevant tool to relax our problem. In all our experiments we performed  exact searches.

Let $i,j$ be given and assume we want to evaluate the minimal value
$$M = \min_{k} \left\{ \frac{\lambda_i}{2}\vert y_i^j-z_k\vert^2 -\varphi_i^k  \right\}.$$
Let us then define $c_i=\max_k \varphi_i^k $ and
$$ M = -c_i + \min_{k} \left\{ \frac{\lambda_i}{2}\vert y_i^j-z_k\vert^2 + c_i -\varphi_i^k  \right\}.$$
Since the latter term is positive:
$$M=-c_i + \min_k || P_i^j - Q_i^k ||^2$$
where $\displaystyle P_i^j = (\sqrt{\frac{\lambda_i}{2}} y_i^j,0)$ is a fixed vector of $\R^{d+1}$, $\displaystyle Q_i^k = (\sqrt{\frac{\lambda_i}{2}}z_k,\sqrt{c_i -\varphi_i^k})$ and $||.||$ stands for the standard euclidean norm of $\R^{d+1}$. Thus our supergradient request reduces to identify one closest point of $P_i^j$ among points of $(Q_i^k)_k$. Observe that the family  $(Q_i^k)_k$ does not depend of the parameter $j$. This task is a standard operation in computational geometry which can be performed efficiently with  \emph{kd-tree} structures. By using such tools, we can reduce the complexity of the supergradient request in the generic case to an order of  $I \times \log N_k \times \sum N_i $ operations. Observe that if the $N_i$'s and $N_k$ are of  same order $N$, one request is generically of complexity of order $I^2 N \log N$.

\paragraph{Numerical quantization and localization.} 
We suppose that all the measures $\mu_i$ are compactly supported. 
We use the discretization of Section~\ref{sec:discretization}.  
The support of the unknown barycenter measure is bounded using the results of Section~\ref{sec:localization}, in particular, \eqref{localizbar}

\paragraph{Reconstruction of the barycenter density.} One additional difficulty associated to the dual formulation is the fact that optimal dual vectors only give an implicit description of the barycentric measure. In order to recover the support and the density of the barycentric measure, we introduce the following least square procedure. 

Every optimal dual vector  $(\varphi_i^k)_k$ must be associated to an optimal transport from $\mu_i$ to the barycentric measure. A crucial observation is the fact that every associated map transports the  $\mu_i$ to the same measure. We exploit this optimality condition to recover the barycentric measure through the coefficients $f_i^{j,k}$ which describe the mass transported from $y_i^j$  to $z_k$. By optimality, some mass can be transported from $y_i^j$  to $z_k$ if and only if 
\begin{figure}[htbp]
\centering
\begin{tabular}{c c}
\includegraphics[width=.39\textwidth]{../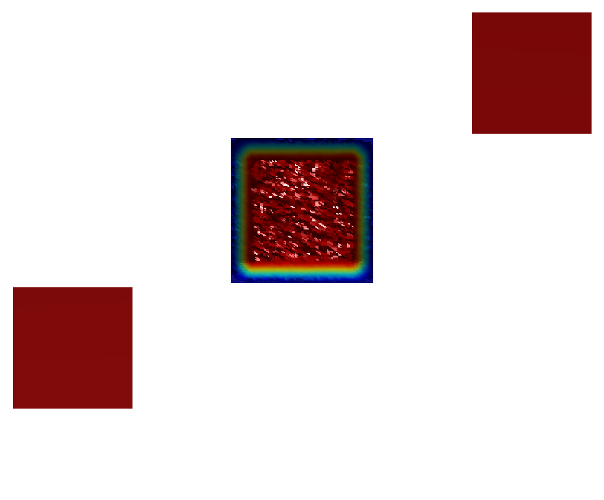}
&	\includegraphics[width=.49\textwidth]{../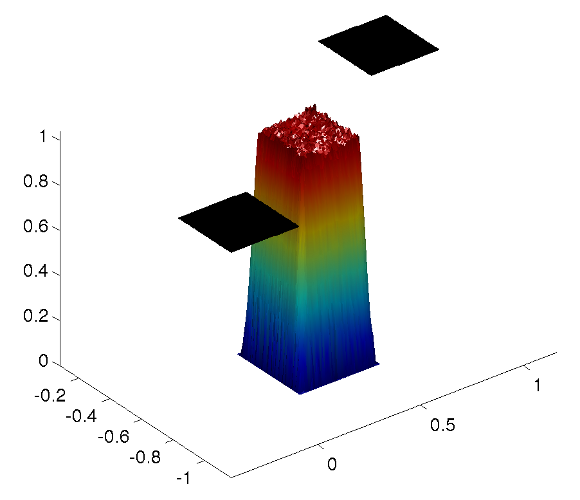}
\end{tabular}
\caption{Classical McCann interpolation between translated measures}
\label{fig:bary_dual_0}
\end{figure}
\begin{equation}
\label{vopt}
G_{i,j}(\varphi_i^{k}) =  \min_{k'=1,\dots,N_k} G_{i,j}(\varphi_i^{k'}).
\end{equation}
where $G_{i,j}(\varphi_i^{k'}) = \frac{\lambda_i}{2}\vert y_i^j-z_k\vert^2 -\varphi_i^{k'} $. Let us fix some parameter $\varepsilon>0$. Based on the  previous observation, we only consider the unknown coefficients $f_i^{j,k}$ for which $G_{i,j}(\varphi_i^{k})$ is less than the optimal value  \eqref{vopt} plus $\varepsilon$.  In order to recover the barycentric measure, we look for the set of coefficients which generate the same measures in an optimal least square sense. More precisely, we solve the sparse least square   problem:
\begin{equation}
\label{optrecover}
\min_{f_i^{j,k}} \sum_{l,m} \sum_k (\sum_j f_l^{j,k} \nu_l^j - \sum_{j'} f_m^{j',k} \nu_m^{j'} )^2
\end{equation}
among non negative coefficients less than one which satisfy the linear constraints 
$$\forall i,j,\,  \sum_k f_i^{j,k} = 1.$$


\subsection{Numerical results}\label{sec:Num2}

 As  detailed in the previous paragraphs, our approach relies first on a non smooth optimization step using an hybrid LBFGS/Bundle algorithm and a fast computation of supergradient vectors. In a second phase, a sparse least square problem is solved in order to recover an approximation of barycentric density. Let us point out that in all the following examples, the first optimization step was stopped after one hour of computation on a standard PC. This costly step could be dramatically sped up in using a straightforward parallelized cost evaluation.

We validate our approach by considering different test cases for which analytic descriptions of barycenters are available. The simplest situation is the case of the barycenters of a measure of density  $\rho(.)$ and a translated measure of density $\rho(. + V)$ where $V$ is some fixed vector.  In this trivial case, the isobarycentric measure is of course the measure of density $\rho(. +V/2)$. We display in figure \ref{fig:bary_dual_0}, the barycentric measure obtained by our located approximation scheme applied to $\rho = \chi_c$ where $\chi_c$ is the characteristic function of a unit square. In this experiment, we used a grid of size $200 \times 200$ and a recovering parameter $\varepsilon = 10^{-5}$. In the least square optimization problem \eqref{optrecover}, we obtain an error of order $ 10^{-4}$ for every quadratic term.

Next we applied our approach to the case of Gaussian measures. A complete description of barycenters of Gaussian measures has been given in \cite{ac}: consider a family of Gaussian measures $\mu_i(m_i,S_i)$ 
of means $(m_i)_i$ and covariance matrices $(S_i)_i$. Then, the barycentric measure  associated to the non-negative weights $(\lambda_i)_i$ is a Gaussian measure of mean the barycenter of the $(m_i,\lambda_i)_i$. Moreover, its covariance matrix is the only definite positive matrix $S$ solution of the equation
\begin{equation}
\label{covmatrix_eq}
\sum_i \lambda_i (S^{1/2}S_iS^{1/2})^{1/2}=S.
\end{equation}

\begin{figure}[htbp]
\centering  
\begin{tabular}{c c}
\includegraphics[width=.29\textwidth]{../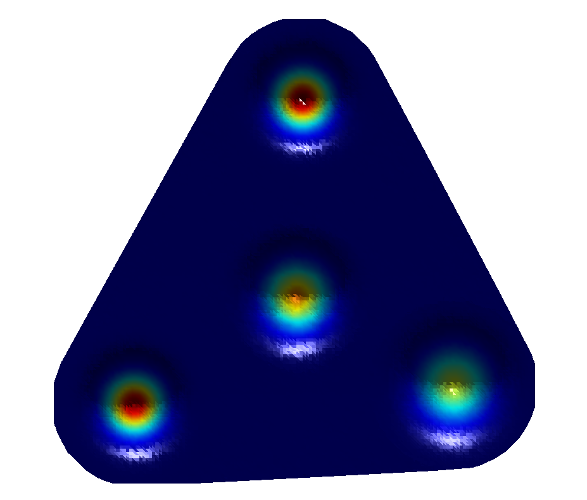}
&	\includegraphics[width=.29\textwidth]{../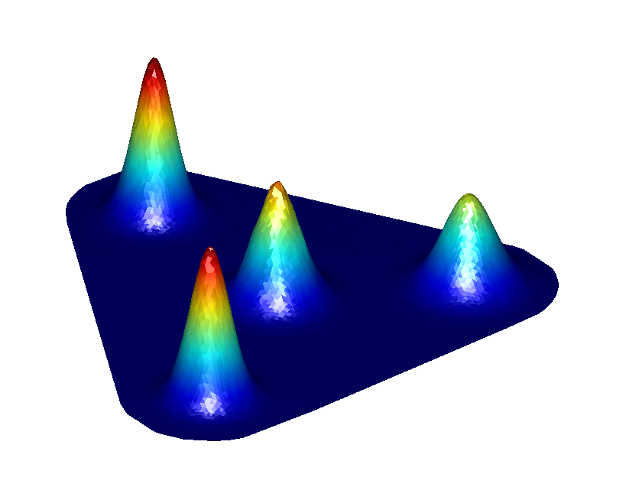}\\
	\includegraphics[width=.29\textwidth]{../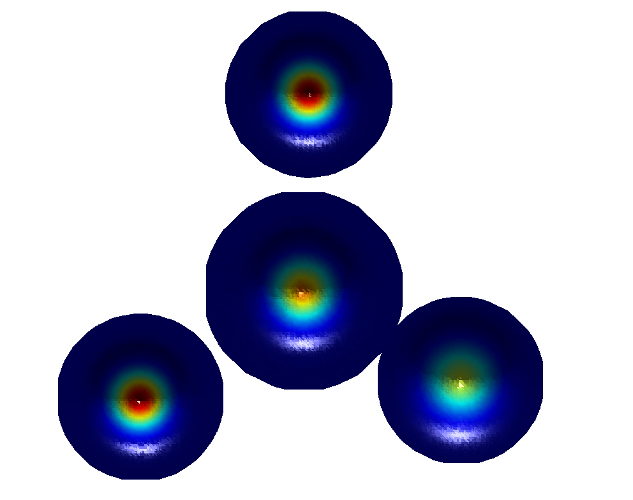}
&	\includegraphics[width=.29\textwidth]{../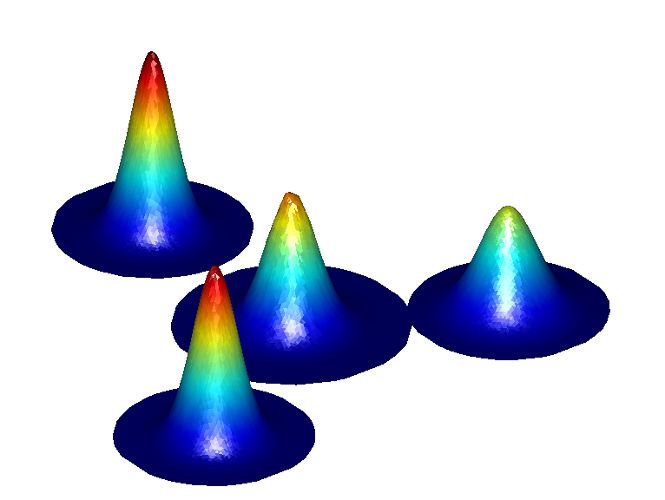}
\end{tabular}
\caption{Isobarycenter computation of three gaussian measures by a global (first row) and a localized approach (second row).}
\label{fig:bary_dual_1}
\end{figure}

We denote by $\mathcal{N}(m_i,\sigma_i)$ a Gaussian of mean $m_i$ and of covariance matrix equal to $\sigma_i^2 Id$. We considered two different test cases and applied for both our global and localized approaches. In all the experiments the number of sampling points of the given measures and of the barycentric measure have been fixed for the global approach to $\forall i,\, N_i=N_z = 15 \times 10^3$. For the localized approach by Minkowski sum, we imposed $\forall i,\, N_i=N_z = 5 \times 10^3$.
The first test case consists in approximating the isobarycenter of the three Gaussian measures  of random standard variations $\mathcal{N}((0.1,0.8),1/49.75)$,  $\mathcal{N}((-0.9,-1),1/35.89)$ and $\mathcal{N}((1,-0.9),1/74.63)$. More precisely, due to the infinite support of Gaussian measures, we apply the following threshold: for every Gaussian measure, we restrict the support to the grid point contained in a unit disk centered at the  mean vector. Thus, we apply a uniform normalization to obtain measures of the same total mass.
  
The resulting barycenter and the given measures are presented in figure \ref{fig:bary_dual_1}. Our second test case is related to the approximation of the barycenter of the five gaussian measures $(\mathcal{N}(m_i,\sigma_i),\lambda_i)_{i=1,\dots,5}$ where the $m_i$ are the vertices  of  a regular pentagon with $\sigma_i=1/50$ and $\lambda_i=1/7$ if $i$ is odd and $\sigma_i=1/100$ and $\lambda_i=2/7$ otherwise. The resulting barycenter and the given measures are drawn in figure \ref{fig:bary_dual_2}. Observe that in the localized illustrations, the support of the unknown measure is not anymore centered due to the loss of symmetry in Minkowski's sum. To conclude the study of those test cases, we provide in table \ref{table:dis}, the errors between the theoretical and computed means and covariance matrices. As expected, even if the number of degree of freedom is smaller, the results obtained by the located approach are significantly better than the ones obtained by the first algorithm.

\begin{table}[b]
\begin{center}
\begin{tabular}{l|c|c|}
                               & $||m_i^{th} - m_i||$    & $|\sigma_i^{th} - \sigma_i|$\\  
\hline  
First test case                &  $ 0.003 $       & $ 0.005$         \\ 
First test case localized      &  $ 0.0002$       & $ 0.0014$        \\ 
Second test case               &  $ 0.002$        & $ 0.07$          \\ 
Second test case localized     &  $ 0.0002$       & $ 0.0013$        \\ 
\end{tabular}
\caption{Upper bounds of the gap between  theoretical and computed means and covariance coefficients}
\end{center}
\label{table:dis}
\end{table}

To conclude our numerical experiments, we provide large scale examples in which we interpolate three textures of images of size  $150 \times 150$. This type of applications have been first studied in the framework of optimal transportation in \cite{peyre} (also see Galerne et al. \cite{ggm} for a different setting using the Fourier spectrum that is useful in the case of color images). The texture mixing problem consists in synthesizing a texture from a family of given textures. The interest of using Wasserstein barycenters in this context is due to the spatial nature of Wasserstein distance which provides a more natural interpolation process than the naive pointwise means (see figures \ref{fig:texture1} and \ref{fig:texture2}). We carried out similar experiments as the one depicted in \cite{peyre}. Our contribution with respect to \cite{peyre} lies in the fact that we do not replace the quadratic Wasserstein distance  by  the easier to handle so-called sliced Wasserstein distance (which is an average over directions of  one dimensional Wasserstein distances). We obtained by our method an approximation of the original model up to an error of $10^{-3}$ for every quadratic term of \eqref{optrecover}.


\begin{figure}[htbp]
\centering
\begin{tabular}{c c}
\includegraphics[width=.29\textwidth]{../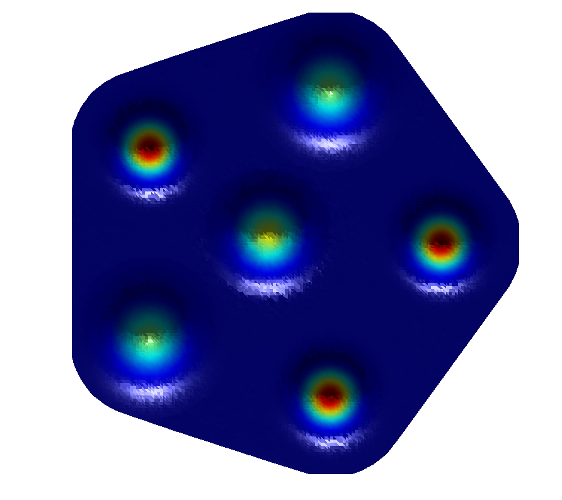}
&	\includegraphics[width=.29\textwidth]{../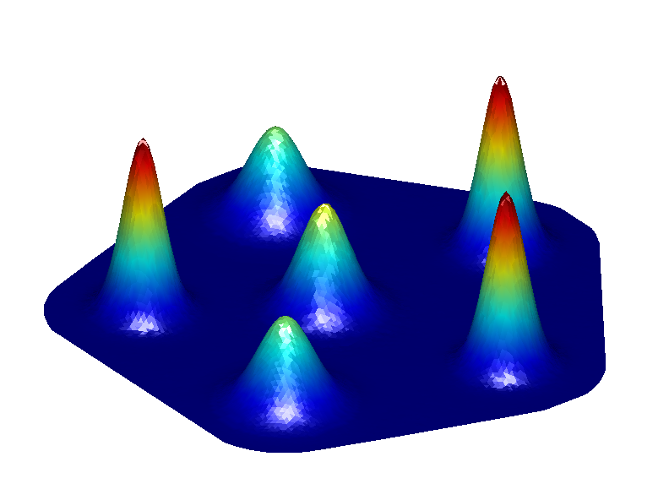}\\
	\includegraphics[width=.29\textwidth]{../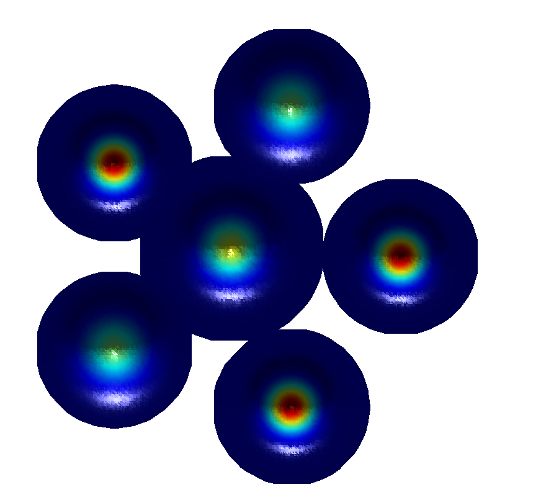}
&	\includegraphics[width=.29\textwidth]{../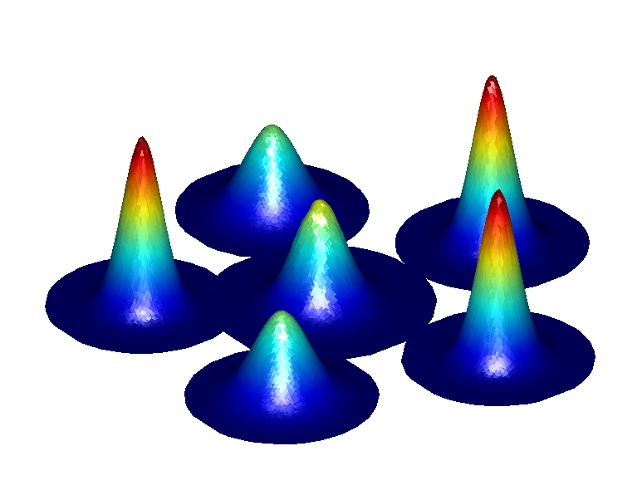}
\end{tabular}
\caption{Non uniform barycenter computation of five gaussian measures by a global (first row) and a localized approach (second row).}
\label{fig:bary_dual_2}
\end{figure}

\begin{figure}[htbp]
\centering
\begin{tabular}{c c c}
	\includegraphics[width=.29\textwidth]{../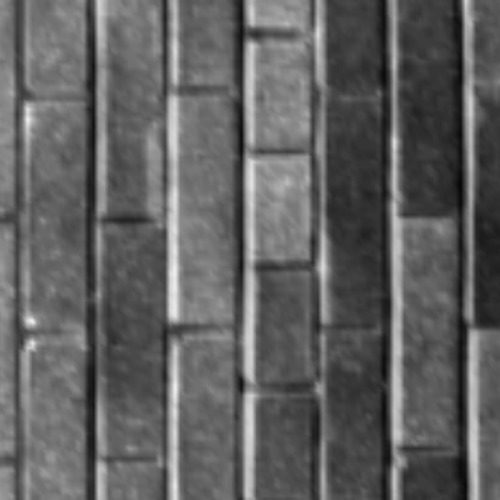}
&	\includegraphics[width=.29\textwidth]{../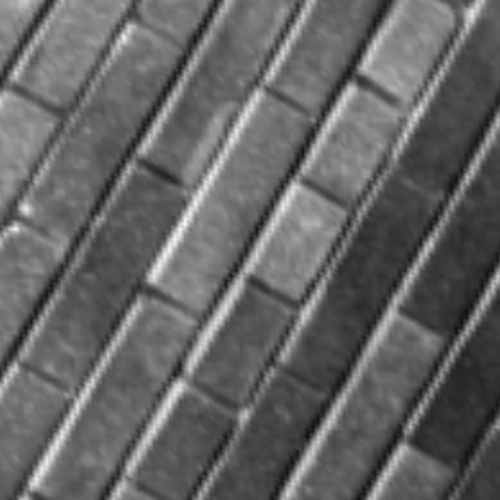}		
&	\includegraphics[width=.29\textwidth]{../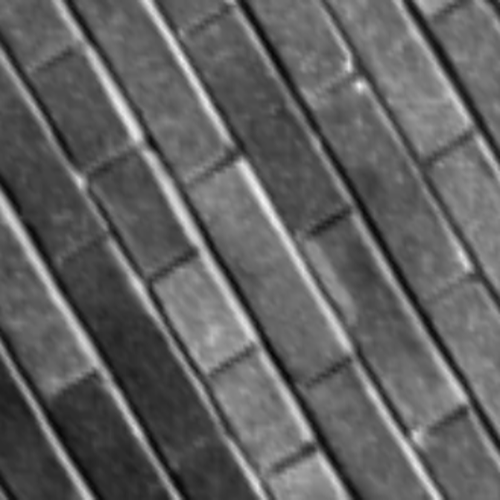}
\end{tabular}
\begin{tabular}{c c}
\includegraphics[width=.29\textwidth]{../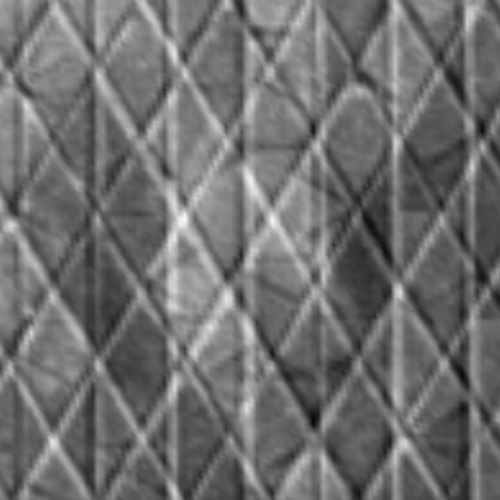}
&\includegraphics[width=.29\textwidth]{../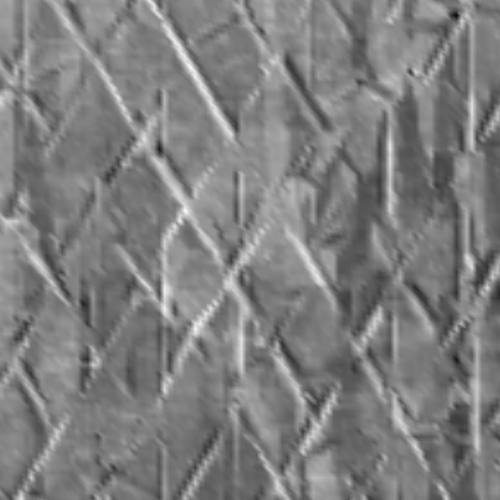}
\end{tabular}
\caption{Isobarycenter of the three textures of the first row. The pointwise mean of the three textures corresponds to the left picture of the second row. Wasserstein barycenter is presented on the right picture of the second row.}
\label{fig:texture1}
\end{figure}

\begin{figure}[htbp]
\centering
\begin{tabular}{c c c}
	\includegraphics[width=.29\textwidth]{../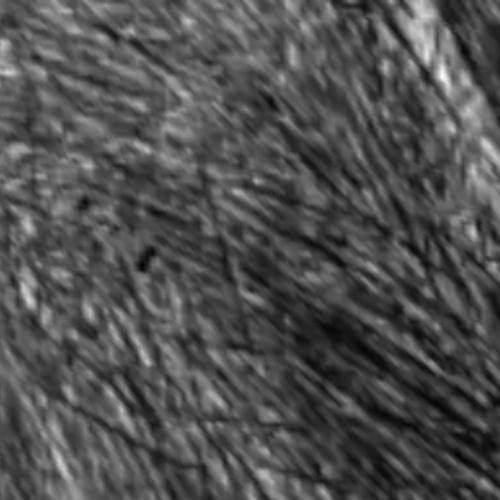}
&	\includegraphics[width=.29\textwidth]{../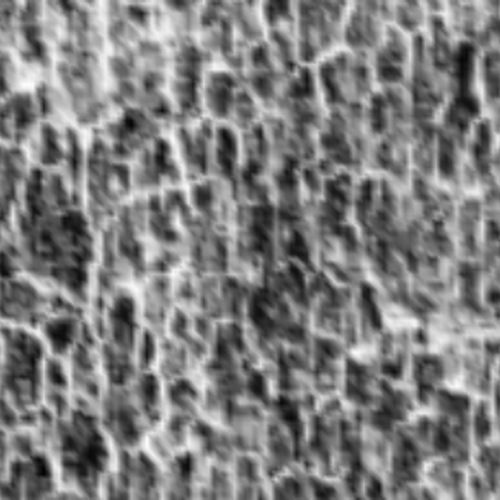}		
&	\includegraphics[width=.29\textwidth]{../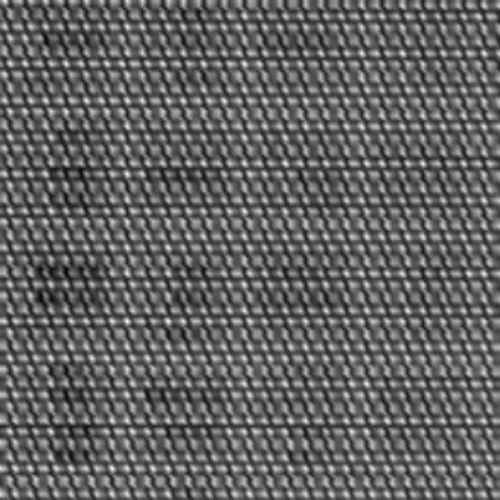}
\end{tabular}
\begin{tabular}{c  c}
\includegraphics[width=.29\textwidth]{../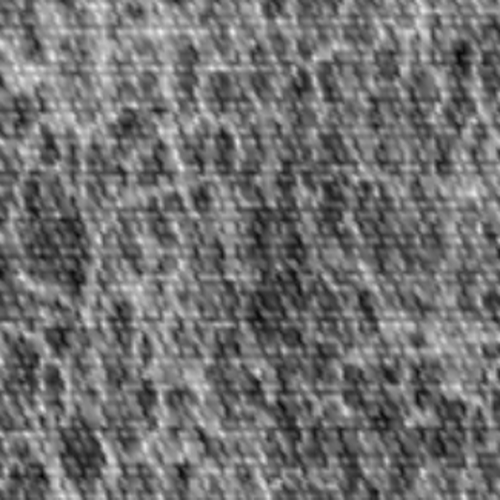}
&\includegraphics[width=.29\textwidth]{../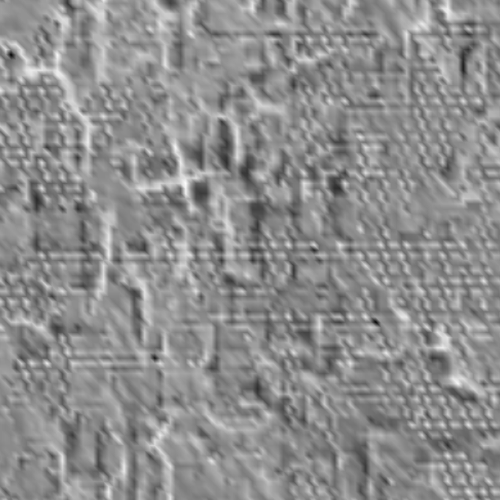}
\end{tabular}
\caption{Isobarycenter of the three textures of the first row. The pointwise mean of the three textures corresponds to the left picture of the second row. Wasserstein barycenter is presented on the right picture of the second row.}
\label{fig:texture2}
\end{figure}



\smallskip 

\textbf{Acknowledgements:} The authors are grateful to the hospitality of BIRS where the present work was initiated at the occasion on a focused group meeting on Numerical methods for optimal transport.  They are happy to thank Martial Agueh, Jean-David Benamou and Brendan Pass for many fruitful conversations. The first author gratefully acknowledges the support of the ANR, through the projects ISOTACE and OPTIFORM and INRIA through the "action exploratoire" MOKAPLAN.

\bibliographystyle{alpha}
\bibliography{BaryCenter}

\begin{thebibliography}{LMfASA78}

\bibitem[AC11]{ac}
Martial Agueh and Guillaume Carlier.
\newblock Barycenters in the wasserstein space.
\newblock {\em SIAM Journal on Mathematical Analysis}, 43(2):904--924, 2011.

\bibitem[BB00]{bb}
Jean-David Benamou and Yann Brenier.
\newblock A computational fluid mechanics solution to the {M}onge-{K}antorovich
  mass transfer problem.
\newblock {\em Numer. Math.}, 84(3):375--393, 2000.

\bibitem[BFO14]{benamou2014numerical}
Jean-David Benamou, Brittany~D Froese, and Adam~M Oberman.
\newblock Numerical solution of the optimal transportation problem using the
  monge--amp{\`e}re equation.
\newblock {\em Journal of Computational Physics}, 260:107--126, 2014.

\bibitem[BK12]{bk}
J{\'e}r{\'e}mie Bigot and Thierry Klein.
\newblock Consistent estimation of a population barycenter in the wasserstein
  space.
\newblock {\em arXiv preprint arXiv:1212.2562}, 2012.

\bibitem[BLO05]{burke2005robust}
J.V. Burke, A.S. Lewis, and M.L. Overton.
\newblock A robust gradient sampling algorithm for nonsmooth, nonconvex
  optimization.
\newblock {\em SIAM Journal on Optimization}, 15(3):751--779, 2005.

\bibitem[Bre91]{Brenier}
Yann Brenier.
\newblock Polar factorization and monotone rearrangement of vector-valued
  functions.
\newblock {\em Comm. Pure Appl. Math.}, 44(4):375--417, 1991.

\bibitem[CD14]{cuturidoucet}
Marco Cuturi and Arnaud Doucet.
\newblock Fast computation of {W}asserstein barycenters.
\newblock Proceedings of the 31st International Conference on Machine Learning
  (ICML-14), pages 685--693. 2014.

\bibitem[CE10]{ce}
Guillaume Carlier and Ivar Ekeland.
\newblock Matching for teams.
\newblock {\em Economic Theory}, 42(2):397--418, 2010.

\bibitem[GB08]{gb08}
Michael Grant and Stephen Boyd.
\newblock Graph implementations for nonsmooth convex programs.
\newblock In V.~Blondel, S.~Boyd, and H.~Kimura, editors, {\em Recent Advances
  in Learning and Control}, Lecture Notes in Control and Information Sciences,
  pages 95--110. Springer-Verlag Limited, 2008.
\newblock \url{http://stanford.edu/~boyd/graph_dcp.html}.

\bibitem[GB10]{cvx}
M.~Grant and S.~Boyd.
\newblock {CVX}: Matlab software for disciplined convex programming, version
  1.21.
\newblock \url{http://cvxr.com/cvx}, May 2010.

\bibitem[GGM11]{ggm}
Bruno Galerne, Yann Gousseau, and Jean-Michel Morel.
\newblock Random phase textures: theory and synthesis.
\newblock {\em IEEE Trans. Image Process.}, 20(1):257--267, 2011.

\bibitem[GM13]{ghoussoub1}
Nassif Ghoussoub and Abbas Moameni.
\newblock A self-dual polar factorization for vector fields.
\newblock {\em Comm. Pure Appl. Math.}, 66(6):905--933, 2013.

\bibitem[GM14]{ghoussoub2}
Nassif Ghoussoub and Bernard Maurey.
\newblock Remarks on multi-marginals symmetric {M}onge-{K}antorovich problems.
\newblock {\em Discrete Cont. Dyn. Syst}, 34(4):1465--1480, 2014.

\bibitem[GS98]{gansw}
Wilfrid Gangbo and Andrzej Swiech.
\newblock Optimal maps for the multidimensional {M}onge-{K}antorovich problem.
\newblock {\em Communications on pure and applied mathematics}, 51(1):23--45,
  1998.

\bibitem[HMM07]{haarala2007globally}
Napsu Haarala, Kaisa Miettinen, and Marko~M M{\"a}kel{\"a}.
\newblock Globally convergent limited memory bundle method for large-scale
  nonsmooth optimization.
\newblock {\em Mathematical programming}, 109(1):181--205, 2007.

\bibitem[HUL96]{urruty1996convex}
J.B. Hiriart-Urruty and C.~Lemar{\'e}chal.
\newblock {\em Convex analysis and minimization algorithms}, volume~1.
\newblock Springer, 1996.

\bibitem[LMfASA78]{lemarechal1978nonsmooth}
C.~Lemar{\'e}chal, R.~Mifflin, and International~Institute for Applied
  Systems~Analysis.
\newblock {\em Nonsmooth optimization}.
\newblock Pergamon Press, 1978.

\bibitem[LO08]{lewis2008behavior}
A.S. Lewis and M.L. Overton.
\newblock Behavior of {BFGS} with an exact line search on nonsmooth examples.
\newblock Technical report, Technical report, Optimization Online, 2008b.
  http://www. optimization-online. org/DB\_FILE/2008/12/2173. pdf, submitted to
  SIAM J. Optimization, 2008.

\bibitem[LO09]{lewis2009nonsmooth}
A.S. Lewis and M.L. Overton.
\newblock Nonsmooth optimization via {BFGS}.
\newblock {\em SIAM Journal of Optimization, submitted for publication}, 2009.

\bibitem[McC97]{mci}
Robert~J. McCann.
\newblock A convexity principle for interacting gases.
\newblock {\em Adv. Math.}, 128(1):153--179, 1997.

\bibitem[MO12]{mo2012discrete}
Quentin M{\'e}rigot and {\'E}douard Oudet.
\newblock Discrete optimal transport: complexity, geometry and applications.
\newblock Preprint, 2012.

\bibitem[Pas12a]{pass1}
Brendan Pass.
\newblock Multi-marginal optimal transport and multi-agent matching problems:
  uniqueness and structure of solutions.
\newblock {\em arXiv preprint arXiv:1210.7372}, 2012.

\bibitem[Pas12b]{pass2}
Brendan Pass.
\newblock On the local structure of optimal measures in the multi-marginal
  optimal transportation problem.
\newblock {\em Calc. Var. Partial Differential Equations}, 43(3-4):529--536,
  2012.

\bibitem[Pow76]{powell1976some}
M.J.D. Powell.
\newblock Some global convergence properties of a variable metric algorithm for
  minimization without exact line searches.
\newblock {\em Nonlinear programming}, 9:53--72, 1976.

\bibitem[PPO14]{papadakis2014optimal}
Nicolas Papadakis, Gabriel Peyr{\'e}, and Edouard Oudet.
\newblock Optimal transport with proximal splitting.
\newblock {\em SIAM Journal on Imaging Sciences}, 7(1):212--238, 2014.

\bibitem[RPDB12]{peyre}
Julien Rabin, Gabriel Peyr{\'e}, Julie Delon, and Marc Bernot.
\newblock Wasserstein barycenter and its application to texture mixing.
\newblock In {\em Scale Space and Variational Methods in Computer Vision},
  pages 435--446. Springer, 2012.

\bibitem[Sch03]{schrijver2003combinatorial}
Alexander Schrijver.
\newblock {\em Combinatorial optimization: polyhedra and efficiency},
  volume~24.
\newblock Springer, 2003.

\bibitem[Vil03]{villani}
C{\'e}dric Villani.
\newblock {\em Topics in optimal transportation}, volume~58.
\newblock AMS Bookstore, 2003.

\bibitem[Vil09]{villani2}
C{\'e}dric Villani.
\newblock {\em Optimal transport: old and new}, volume 338.
\newblock Springer, 2009.

\end{thebibliography}

\end{document}